\newtheorem{theorem}{Theorem}[section]
\newtheorem{proposition}[theorem]{Proposition}
\newtheorem{corollary}[theorem]{Corollary}
\newtheorem{definition}[theorem]{Definition}
\theoremstyle{remark}
\def\d{\delta}
\newcommand{\hol}{{\mathcal Hol}}
\newcommand{\holo}{{\mathcal Hol}(\Omega)}
\newcommand{\ttn}{\widetilde{T}_N}
\newcommand{\so}{\mathcal{S}(\Omega)}
\newcommand{\ttln}{\widetilde{T}_{\lambda_n}}
\newcommand{\sto}{\mathcal{S}_t(\Omega)}
\newcommand{\uo}{U(\Omega)}
\newcommand{\uoz}                                                                                                    {U(\Omega, \zeta)}
\title{Boundedness of derivatives and anti-derivatives of holomorphic functions as a rare phenomenon}
\author{Maria Siskaki}
\date{}
\begin{document}
\maketitle

\begin{abstract}
  In this article we prove a general result which in particular suggests that, on a simply connected domain $\Omega$ in $\mathbb{C}$, all the derivatives and anti-derivatives of the generic holomorphic function are unbounded. A similar result holds for the operator $\ttn$ of partial sums of the Taylor expansion with center $\zeta \in \Omega$ at $z=0$, seen as functions of the center $\zeta$. We also discuss a universality result of these operators $\ttn$.
\end{abstract}
\vspace{5pt}
\begin{flushleft} AMS Classification n$^o$: primary 30K99, secondary 30K05
  \end{flushleft}
\vspace{5pt}
\begin{flushleft}
  Key words and phrases: Baire's Theorem, generic property, Differentiation and Integration operators, Taylor expansion, partial sums, universal Taylor series
\end{flushleft}

\section{Introduction}
\par
Let $\Omega$ be a domain in the complex plane and consider the space $\holo$ of all the functions that are holomorphic on $\Omega$ with the topology of uniform convergence on compacta. In the first section of this article we show that, for a function $f \in \holo$, the phenomenon of its $k$- derivative or $k$-anti-derivative being bounded on $\Omega$ is a rare phenomenon in the topological sense, provided that $\Omega$ is simply connected. We do this by using Baire's Theorem and we prove that the set $\mathcal{D}$ of all the functions $f \in \holo$ with the property that the derivatives and the anti-derivatives of $f$ of all orders are unbounded on $\Omega$ is a dense $G_\d$ set in $\holo$.
\par
If a function $f$ is holomorphic in an open set containing $\zeta$, then $S_N(f,\zeta)(z)$ denotes the $N$-th partial sum of the Taylor expansion of $f$ with center $\zeta$ at $z$.
If $\Omega$ is a simply connected domain and $\zeta \in \Omega$, we define the class $\uoz$ as follows:
\begin{definition}
  The set $\uoz$ is the set of all functions $f \in \holo$ with the property that, for every compact set $K \subset \mathbb{C}$, $K \cap \Omega = \emptyset$, with $K^{\mathsf{c}}$ connected, and for every function $h$ which is continuous on $K$ and holomorphic in the interior of $K$, there exists a sequence $\{\lambda_n\} \in \{0,1,2,...\}$ such that
  \begin{equation*}
  \sup\limits_{z \in K}|S_{\lambda_n}(f,\zeta)(z)-h(z)| \longrightarrow 0, \hspace{10pt} n \rightarrow \infty
\end{equation*}
\end{definition}

Denote $\mathbb{D}= \{z \in \mathbb{C}: |z|<1 \}$. It is shown in \cite{nestoridis1996universal} that $U(\mathbb{D},0)$ is a  dense $G_\d$ set in $\hol (\mathbb{D})$. More generally, in \cite{nestoridis1999extension} it is shown that $\uoz$ is a dense $G_\d$ set in $\holo$, where $\Omega$ is any simply connected domain and $\zeta \in \Omega$. Next, for $\Omega$ as above, we define the set $\uo$:
\begin{definition}
  The set $\uo$ is the set of all functions $f \in \holo$ with the property that,  for every compact set $K \subset \mathbb{C}$, $K \cap \Omega = \emptyset$, with $K^{\mathsf{c}}$ connected, and every function $h$ which is continuous on $K$ and holomorphic in the interior of $K$, there exists a sequence $\{\lambda_n\} \in \{0,1,2,...\}$ such that, for every compact set $L \subset \Omega$,
\begin{equation*}
  \sup\limits_{\zeta \in L}\sup\limits_{z \in K}|S_{\lambda_n}(f,\zeta)(z)-h(z)| \longrightarrow 0, \hspace{10pt} n \rightarrow \infty
\end{equation*}
\end{definition}

Again in \cite{nestoridis1999extension} it is shown that $\uo$ is a dense $G_\d$ set in $\holo$. Furthermore, in \cite{melas2001universality} it is shown that $\uoz = \uo$, provided that $\Omega$ is contained in a half-plane. This result is generalized in \cite{muller2006universal}, where it is shown that $\uoz = \uo$ for any simply connected domain $\Omega$ and $\zeta \in \Omega$.
\par
In the second section of this article, we fix a $\zeta_0 \in \Omega$ and, for $N \geq 1$, we consider the function
\begin{align*}
  S_N(f,\zeta_0): \mathbb{C} & \rightarrow \mathbb{C} \\
  z& \mapsto \sum_{n=0}^{N}\frac{f^{(n)}(\zeta_0)}{n!}(z-\zeta_0)^n = S_N(f,\zeta_0)(z)
\end{align*}

 \par
V. Nestoridis suggested that, contrary to the functions in $ \uoz$, whose Taylor partial sums are considered as functions of $z$ with the center $\zeta$ fixed, we fix $z=0$ and let the center $\zeta$ vary in $\Omega$. Thus, for $N \geq 0$, we obtain an operator
\begin{align*}
  \ttn : \holo & \rightarrow \holo \\
  f & \mapsto \ttn (f)
\end{align*}
where
\begin{align*}
  \ttn(f): \Omega & \rightarrow \mathbb{C}\\
  \zeta & \mapsto \sum_{n=0}^{N}\frac{f^{(n)}(\zeta)}{n!}(- \zeta)^n = \ttn (f)(\zeta)
\end{align*}
for any $f \in \holo$ and $N \geq 0$.
 The set of functions $f \in \holo$ such that $\ttn(f)$ is unbounded on $\Omega$ for all $N \geq 0$ is residual in $\holo$. This led V.Nestoridis to conjecture that,  if $0 \notin \Omega$, then the class $\so$ of all functions $f \in \holo$ with the property that the set $\{\ttn (f): N = 0,1,2,...\}$ is dense in $\holo$ is a dense $G_\d$ set in $\holo$. In this article we show that either $\so = \emptyset$ or $\so$ is a dense $G_\d$ set in $\holo$. The question of whether $\so \neq \emptyset$ will be examined in a future article. However, we do show that, if $0 \notin \Omega$, then the set $\sto$ of the functions $f \in \holo$ with he property that the closure of the set $\{\ttn (f)\}$ contains the constant functions on $\Omega$ is residual in $\holo$. We do this by proving that $\sto$ contains the set $\uo$, which is already proven to be a dense $G_\d$ set in $\holo$ (\cite{nestoridis1999extension}).
\par
In the last part of the article, answering a question by T. Hatziafratis, we prove that, for a countable set $E \subset \mathbb{T}=\{z \in \mathbb{C}: |z|=1\}$, the generic holomorphic function on $\mathbb{D}$ has unbounded derivatives and anti-derivatives on each ray $[0,z)$, $z \in E$. We also obtain a more general result, where in fact we do not use Baire's Theorem and , therefore, the topological vector space used need not be a Fr\'echet space.

\section{Preliminaries}
\par
Regarding the terminology used, a set $\Omega \subset \mathbb{C}$ is called a \textit{domain} if it is open and connected in $\mathbb{C}$. A $G_\d$ set in $\holo$ is a countable intersection of open sets in $\holo$ and an $F_\sigma$ set is a countable union of closed sets in $\holo$. Furthermore, a subset $E$ of $\holo$ is called \textit{dense} if
there exists no non-empty open subset $U$ of $\holo$ such $U$ and $E$ are
disjoint. The set $E$ is \textit{nowhere dense} in $\holo$ if every
non-empty open set $U$ has an open non-empty subset $V$ such that
$E$ and $V$ are disjoint. This is equivalent to the closure of $E$ having an empty interior in $\holo$. A \textit{set of the first category} in $\holo$ is a set that can be expressed as a countable union of nowhere dense sets in $\holo$. A $G_\d$ dense subset of $\holo$ is a $G_\d$ subset which is also dense. Because the space $\holo$ is metrizable complete, Baire's theorem implies that a subset of $\holo$ is $G_\d$ dense iff it is the countable intersection of open and dense subsets of $\holo$. A subset of $\holo$ is called residual if it contains a $G_\d$ dense set. Equivalently, if its complement is contained in an $F_\sigma$ set of the first category.   \\
\par
Let $\Omega_1, \Omega_2$ be two domains in $\mathbb{C}$ and $T: \hol(\Omega_1) \rightarrow \hol(\Omega_2)$ be a linear operator with the property that for every $z \in \Omega_2$, the function $ f \mapsto T(f)(z)$ is continuous in $\hol(\Omega_1)$. Observe that this latter property is weaker than $T$ being continuous. Define
\begin{equation*}
  \mathcal{U}_T= \big\{f \in \hol(\Omega_1): T(f) \hspace{3pt} \text{  is unbounded on } \Omega_2 \big\}
\end{equation*}
\begin{proposition}
\label{eitheror}
  If $\Omega_1, \Omega_2 $ are two domains in $\mathbb{C }$ and $T$ is as above, then either $\mathcal{U}_T = \emptyset$ or $\mathcal{U}_T$ is a dense $G_\delta$ set in $\hol(\Omega_1)$.
\end{proposition}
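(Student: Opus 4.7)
The plan is to write $\mathcal{U}_T$ as a countable intersection of open sets and then, assuming it is nonempty, exploit linearity of $T$ to show density. Once both are established the conclusion follows.

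For the $G_\delta$ part, I would write
\begin{equation*}
  \mathcal{U}_T \;=\; \bigcap_{m=1}^{\infty}\, \{\, f \in \hol(\Omega_1) : \sup_{z\in\Omega_2}|T(f)(z)| > m \,\}
  \;=\; \bigcap_{m=1}^{\infty}\,\bigcup_{z\in\Omega_2}\, \{\, f : |T(f)(z)| > m \,\},
\end{equation*}
and observe that, for each fixed $z\in\Omega_2$, the hypothesis that $f\mapsto T(f)(z)$ is continuous on $\hol(\Omega_1)$ makes $\{f : |T(f)(z)|>m\}$ open. Taking the union over $z\in\Omega_2$ preserves openness, and the countable intersection over $m$ then gives a $G_\delta$ set. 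No countability of $\Omega_2$ is needed here; it is the countability of the intersection over $m$ that matters.

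For density, assume $\mathcal{U}_T \neq \emptyset$ and fix some $f_0\in\mathcal{U}_T$. Let $U\subset\hol(\Omega_1)$ be an arbitrary nonempty open set and pick $g\in U$. If $g\in\mathcal{U}_T$ we are done. Otherwise $T(g)$ is bounded on $\Omega_2$, while $T(f_0)$ is unbounded. By linearity, $T(g+\varepsilon f_0)=T(g)+\varepsilon T(f_0)$, which is unbounded for every $\varepsilon\neq 0$. For $\varepsilon$ small enough, $g+\varepsilon f_0\in U$ because $U$ is open, and $g+\varepsilon f_0\in\mathcal{U}_T$. So $U\cap\mathcal{U}_T\neq\emptyset$, giving density.

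I do not anticipate a serious obstacle: the only subtle point is that $T$ itself is not assumed to be continuous as an operator into $\hol(\Omega_2)$, but only pointwise continuous at each evaluation; this is precisely what is needed to open up the sets $\{f:|T(f)(z)|>m\}$, while the stronger operator continuity is not required. The linearity of $T$ is what turns a single ``witness'' $f_0$ into a dense translation trick, which is a standard zero-one type phenomenon for such subspaces.
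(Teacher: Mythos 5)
Your proof is correct, but it is organized differently from the paper's own proof of this proposition. The paper works with the complement: it sets $U_m=\{f:|T(f)(z)|\le m \text{ for all } z\in\Omega_2\}$, proves each $U_m$ is closed and has empty interior (so $\mathcal{U}_T^{\mathsf c}$ is a countable union of closed nowhere dense sets), and then invokes the Baire-category equivalence to conclude that $\mathcal{U}_T=\bigcap_m U_m^{\mathsf c}$ is dense $G_\delta$. You instead exhibit $\mathcal{U}_T$ directly as $\bigcap_m\bigcup_{z\in\Omega_2}\{f:|T(f)(z)|>m\}$, an intersection of open sets, and then prove density of $\mathcal{U}_T$ itself in one step via the perturbation $g+\varepsilon f_0$, without ever passing through Baire's theorem. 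Both density arguments rest on the same linearity trick (if $T(g)$ and $T(g)+\varepsilon T(f_0)$ were both bounded, $T(f_0)$ would be bounded), so the mathematical core is identical; the difference is that your route needs no completeness or metrizability of the ambient space, only that it is a topological vector space so that $g+\varepsilon f_0\to g$ as $\varepsilon\to 0$. This is precisely the refinement the paper itself arrives at later (Propositions \ref{general either or} and \ref{either or topological vs}), where the author observes that Baire's theorem is dispensable; what the paper's first-category formulation buys in Section 2 is the explicit statement that the complement is a countable union of closed nowhere dense sets, which is the form used in Propositions \ref{a_0}--\ref{b_k}. One small point to make explicit if you write this up: the step ``for $\varepsilon$ small enough, $g+\varepsilon f_0\in U$'' uses continuity of scalar multiplication in $\hol(\Omega_1)$, i.e.\ $\|\varepsilon f_0\|_K=|\varepsilon|\,\|f_0\|_K\to 0$ on each compact $K$, which is the same observation the paper spells out for the sequence $f+\tfrac1n g$.
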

\begin{proof}
If $\mathcal{U}_T \neq \emptyset$, for $m \geq 1$ define
\begin{equation*}
  U_m= \big\{f \in \hol(\Omega_1): |T(f)(z)| \leq m \hspace{5pt} \text{ for all } z \in \Omega_2 \big\}
\end{equation*}
  Then
  \begin{equation*}
  \mathcal{U}_T= \Big(\bigcup\limits_{m=1}^{\infty}U_m \Big)^{\mathsf{c}}= \bigcap\limits_{m=1}^{\infty}U_m^{\mathsf{c}}
  \end{equation*}
  We will show that $U_m$ is closed and nowhere dense in $\hol(\Omega_1)$ for each $m \geq 1$.
  \par
  To see that it is closed, take a sequence $\{f_n\}$ in $U_m$ such that $f_n \longrightarrow f$ uniformly on compact subsets of $\Omega_1$ for some function $f$. Then $f \in \hol(\Omega_1)$ and, for $z \in \Omega_2$ we have
  \begin{align*}
    |T(f)(z)| & \leq |T(f)(z)-T(f_n)(z)| + |T(f_n)(z)|\\
    & \leq |T(f-f_n)(z)| +m
  \end{align*}
  Taking $n \rightarrow \infty$ we get that $|T(f)(z)| \leq m$ because of the continuity of $ f \mapsto T(f)(z)$, i.e. $f \in U_m$. Thus, $U_m$ is closed.\par
  To see that $U_m$ is nowhere dense, it suffices to show that $U_m^{\circ}= \emptyset$. Suppose $f \in U_m^{\circ} $. Since $\mathcal{U}_T \neq \emptyset$, there exists a function $g \in \hol(\Omega_1)$ such that $T(g)$ is unbounded on $\Omega_2$. Then $\{f+\tfrac{1}{n}g\}_n$ is a sequence in $\hol(\Omega_1)$ and, if $K$ is a compact subset of $\Omega_1$, we have
  \begin{align*}
    \| (f+\tfrac{1}{n}g)-f\|_K &= \sup\limits_{z \in K} |f(z)+\frac{1}{n} g(z)-f(z)| \\ &= \sup\limits_{z \in K}|\frac{1}{n}g(z)|=\frac{1}{n} \|g\|_K
  \end{align*}
  By taking $n \rightarrow \infty$ and observing that $\|g\|_K < \infty$,  $g$ being holomorphic on $\Omega_1 \supset K$, we obtain that  $f+\frac{1}{n}g \longrightarrow f$ uniformly on $K$. But $K$ was an arbitrary compact subset of $\Omega_1$, so   $f+\tfrac{1}{n} \hspace{2pt}g \longrightarrow f$ uniformly on compact subsets of $\Omega_1$. \\
  Since $f \in U_m^{\circ}$, there exists an $n_0$ such that $f+\tfrac{1}{n_0} \hspace{2pt}g \in U_m$. By the linearity of $f \mapsto T(f)$ this means that
  \begin{align*}
    \tfrac{1}{n_0} \hspace{2pt}|T(g)(z)| & \leq |T(f)(z)+\tfrac{1}{n_0} \hspace{2pt} T(g)(z)|+|T(f)(z)| \\
    & \leq m+m
  \end{align*}
  or $|T(g)(z)| \leq 2mn_0$, for all $z \in \Omega_2$, which is contradictory to the fact that $T(g)$ is unbounded on $\Omega_2$. Thus, $U_m^{\circ}= \emptyset$ and the proof is complete.
\end{proof}

\begin{proposition}
\label{capuT_n}
  For $n \in \mathbb{Z}$, let $T_n: \hol(\Omega_1) \rightarrow \hol(\Omega_2)$ be linear and such that for every $z \in \Omega_2$, the function $ f \mapsto T(f)(z)$ is continuous in $\hol(\Omega_1)$. If $\mathcal{U}_{T_n} \neq \emptyset$ for all $n \in \mathbb{Z}$ then the set $\bigcap\limits_{n} \mathcal{U}_{T_n}$ is dense $G_\delta $ in $\hol(\Omega_1)$.
\end{proposition}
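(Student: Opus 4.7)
The plan is to reduce this immediately to Proposition \ref{eitheror} combined with the Baire category theorem applied in the Fréchet space $\hol(\Omega_1)$.

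First, I would invoke Proposition \ref{eitheror} separately for each $n \in \mathbb{Z}$: since $T_n$ is linear and the point-evaluations $f \mapsto T_n(f)(z)$ are continuous, and since by hypothesis $\mathcal{U}_{T_n} \neq \emptyset$, the proposition gives that each $\mathcal{U}_{T_n}$ is a dense $G_\delta$ subset of $\hol(\Omega_1)$. In particular, using the notation of the preceding proof, $\mathcal{U}_{T_n} = \bigcap_{m=1}^{\infty} U_{n,m}^{\mathsf{c}}$, where each $U_{n,m}$ is closed and nowhere dense, so each $U_{n,m}^{\mathsf{c}}$ is open and dense.

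Next, I would write
\begin{equation*}
\bigcap_{n \in \mathbb{Z}} \mathcal{U}_{T_n} \;=\; \bigcap_{n \in \mathbb{Z}} \bigcap_{m=1}^{\infty} U_{n,m}^{\mathsf{c}},
\end{equation*}
which is a countable intersection (since $\mathbb{Z} \times \mathbb{N}$ is countable) of open dense subsets of $\hol(\Omega_1)$. This already shows the intersection is a $G_\delta$ set.

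Finally, I would invoke the Baire category theorem: the space $\hol(\Omega_1)$ equipped with the topology of uniform convergence on compacta is a Fréchet space (completely metrizable), hence a Baire space. Therefore any countable intersection of open dense subsets is itself dense, and $\bigcap_{n \in \mathbb{Z}} \mathcal{U}_{T_n}$ is a dense $G_\delta$ set in $\hol(\Omega_1)$.

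There is essentially no obstacle here: the content of the proposition lies entirely in Proposition \ref{eitheror}, and the only things to verify are the countability of the index set and the applicability of Baire's theorem, both of which are immediate in this setting.
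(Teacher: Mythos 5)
Your proposal is correct and follows the same route as the paper: apply Proposition \ref{eitheror} to each $T_n$ to get that every $\mathcal{U}_{T_n}$ is a dense $G_\delta$ set, then use Baire's theorem in the complete metric space $\hol(\Omega_1)$ to conclude that the countable intersection is again a dense $G_\delta$ set. Your explicit unpacking into the open dense sets $U_{n,m}^{\mathsf{c}}$ over the countable index set $\mathbb{Z}\times\mathbb{N}$ is a harmless elaboration of the same argument.
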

\begin{proof}
The space $\hol(\Omega_1)$ with the metric of uniform convergence on compacta is a complete metric space, so by Baire's Theorem any countable intersection of dense $G_\d$ sets in $\hol(\Omega_1)$ is again a dense $G_\d$ set in $\hol(\Omega_1)$. Since $\mathcal{U}_{T_n} \neq \emptyset$, it is a dense $G_\d$ set in $\holo$ by Proposition (\ref{eitheror}), $n \in \mathbb{Z}$, and the desired result follows immediately.
\end{proof}
Observe that Propositions (\ref{eitheror}) and (\ref{capuT_n}) still hold if we replace $\hol(\Omega_2)$ by $\mathbb{C}^{X}$, where $X$ is any non-empty set and $\mathbb{C}^{X}$ is the set of all functions from $ X $ to $\mathbb{C}$.

\section{Boundedness of derivatives and anti-derivatives as a rare phenomenon}
\begin{proposition}
\label{a_0}
  Let $\Omega \subset \mathbb{C}$ be open and non-empty. The set $\mathcal{A}_0$ of all functions $f \in \hol(\Omega)$ that are bounded on $\Omega$ is a set of the first category in $\hol(\Omega)$.
\end{proposition}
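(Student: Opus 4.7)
The plan is to recognize $\mathcal{A}_0$ as the complement of $\mathcal{U}_T$ for $T$ the identity operator $\mathrm{id} : \hol(\Omega) \to \hol(\Omega)$, and then to invoke Proposition \ref{eitheror}. The identity is linear, and for each $z \in \Omega$ the point evaluation $f \mapsto f(z)$ is continuous on $\hol(\Omega)$ because $\{z\}$ is a compact subset of $\Omega$, so the hypothesis of Proposition \ref{eitheror} is met. The proof of that proposition only uses that $\hol(\Omega_1)$ is a Fr\'echet space together with the continuity of the relevant point evaluations, so it goes through for any nonempty open $\Omega$ regardless of connectedness; alternatively, one may argue on each connected component of $\Omega$ separately.

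The only real step is to rule out the empty alternative in Proposition \ref{eitheror} by producing a single unbounded holomorphic function on $\Omega$. I would split into two cases. If $\Omega$ is unbounded, the function $f(z) = z$ lies in $\hol(\Omega)$ and is unbounded on $\Omega$. If $\Omega$ is bounded, then $\Omega$ is a nonempty proper open subset of $\mathbb{C}$, so $\partial \Omega \neq \emptyset$; picking any $z_0 \in \partial \Omega$, the function $f(z) = (z - z_0)^{-1}$ is holomorphic on $\Omega$ (since $z_0 \notin \Omega$) and tends to infinity along any sequence in $\Omega$ converging to $z_0$, hence is unbounded on $\Omega$.

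By Proposition \ref{eitheror}, $\mathcal{U}_{\mathrm{id}}$ is then a dense $G_{\delta}$ in $\hol(\Omega)$. Unraveling the proof of that proposition, one has $\mathcal{A}_0 = \bigcup_{m \geq 1} U_m$ with each $U_m = \{ f \in \hol(\Omega) : |f(z)| \leq m \text{ for all } z \in \Omega \}$ closed and nowhere dense, so $\mathcal{A}_0$ is a countable union of nowhere dense sets and therefore of the first category. There is essentially no obstacle beyond the case split that produces the unbounded function; Proposition \ref{eitheror} absorbs all of the topological work.
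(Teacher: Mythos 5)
Your proposal is correct and is essentially the paper's own argument: the paper proves the statement directly by writing $\mathcal{A}_0=\bigcup_m A_m$ with each $A_m$ closed and nowhere dense, using exactly your case split ($g(z)=z$ for unbounded $\Omega$, $g(z)=(z-z_0)^{-1}$ with $z_0\in\partial\Omega$ for bounded $\Omega$) to produce the unbounded function, which is the same content as invoking Proposition \ref{eitheror} with $T=\mathrm{id}$. Your remark that connectedness of $\Omega$ is not actually needed in the proof of Proposition \ref{eitheror} is accurate and matches the paper's own later observation that these results are immediate corollaries of the general propositions.
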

\begin{proof}
  For $m \in \mathbb{N}$ define
  \begin{equation*}
   A_m = \Big\{ f \in \hol(\Omega): |f(z)| \leq m, \text{ for all } z \in \Omega \Big\}
 \end{equation*}

   It is obvious that
   \begin{equation*}
   \mathcal{A}_0 = \bigcup\limits_{m=1}^{+\infty} A_m
   \end{equation*}
    We will show that every $A_m$ is closed and has an empty interior in $\holo$.
    \par
   For $m \in \mathbb{N}$, the set $A_m$ is closed in $\hol(\Omega)$: Let $\{f_n\}$ be a sequence in $A_m$ and $f$ a function on $\Omega$ such that $f_n \longrightarrow f$ uniformly on compact subsets of $\Omega$. By the Weierstrass theorem, $f\in \hol(\Omega)$ and, for $z \in \Omega$
   \begin{equation*}
     |f(z)|= \lim\limits_{n \longrightarrow \infty} |f_n(z)| \leq m
   \end{equation*}
  Therefore, $f \in A_m$ and $A_m$ is closed in $\hol(\Omega)$ for each $m=1,2,...$.
  \par
  Next we show that  $A_m ^{\circ} = \emptyset $ for all $m=1,2,...$:
  First observe that there exists a function $g \in \hol(\Omega)$ that is unbounded on $\Omega$. Indeed, if $\Omega$ is unbounded take $g(z)=z,$ $z \in \Omega$, and if $\Omega$ is bounded, take $\zeta_0 \in \partial \Omega$ and $g(z)= \frac{1}{z-\zeta_0}$.\\
  Now assume that there exists $f \in A_m^{\circ}$ for some fixed $m=1,2,...$. Then $\{f+\tfrac{1}{n}g\}_n$ is a sequence in $\hol(\Omega)$ and  $f+\frac{1}{n}g \longrightarrow f$ uniformly on compact subsets of $\Omega$,  $n\rightarrow \infty$. But $f \in A_m^{\circ}$, hence there exists an $n_0 \in \mathbb{N}$ such that $f+ \tfrac{1}{n_0}g \in A_m^{\circ}$. This means that
  \begin{equation*}
    |f(z)+\frac{1}{n_0}g(z)| \leq m, \text{ for all } z \in \Omega
  \end{equation*}
  But then, for any $z \in \Omega$ we would have
  \begin{align*}
    |\frac{1}{n_0}g(z)| &= |f(z)+\frac{1}{n_0}g(z)-f(z)| \\
    & \leq |f(z)+\frac{1}{n_0}g(z)| +|f(z)| \\
    & \leq m+m,
  \end{align*}
  Therefore, $|g(z)| \leq 2mn_0$ for all $z \in \Omega$, which is contradictory to the fact that $g$ is unbounded on $\Omega$. Thus, $A_m^{\circ}= \emptyset $ and the proof is complete.
  \end{proof}
  For $f \in \holo$, we denote by $f^{(k)}$ the $k$-derivative of $f$, $k \geq 1$. By $f^{(0)}$ we denote $f$ itself.
  \begin{proposition}
\label{a_k}
  Let $\Omega \subset \mathbb{C}$ be open and non-empty and $k \in \mathbb{N}$. The set $\mathcal{A}_k$ of all functions $f \in \hol(\Omega)$ such that $f^{(k)}$ is bounded on $\Omega$ is a set of the first category in $\hol(\Omega)$.
\end{proposition}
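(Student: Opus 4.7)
The plan is to deduce Proposition \ref{a_k} from the dichotomy of Proposition \ref{eitheror} applied to the $k$-th derivative operator. Set
\begin{equation*}
T_k : \holo \to \holo, \qquad T_k(f) = f^{(k)}.
\end{equation*}
Then $T_k$ is linear, and for each fixed $z \in \Omega$ the evaluation $f \mapsto f^{(k)}(z)$ is continuous on $\holo$: by Weierstrass' theorem, convergence uniformly on compacta in $\holo$ forces the $k$-th derivatives to converge uniformly on compacta, in particular at the single point $z$. Hence the hypotheses of Proposition \ref{eitheror} are in force, with $\Omega_1 = \Omega_2 = \Omega$; if $\Omega$ fails to be connected, I invoke the remark after Proposition \ref{capuT_n} to replace $\hol(\Omega_2)$ by $\mathbb{C}^\Omega$, which causes no trouble since only boundedness of $T_k(f)$ on the underlying set is tested.

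Next, observe that $\mathcal{A}_k$ is precisely the complement of $\mathcal{U}_{T_k}$. By Proposition \ref{eitheror}, once I exhibit a single $f \in \holo$ with $f^{(k)}$ unbounded on $\Omega$, the set $\mathcal{U}_{T_k}$ will be a dense $G_\d$ in $\holo$, so $\mathcal{A}_k$ will be contained in an $F_\sigma$ of the first category, which is the required conclusion.

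The remaining task is to produce such a test function, which I do in the same two cases as in the proof of Proposition \ref{a_0}. If $\Omega$ is unbounded, I take $f(z) = z^{k+1}$, so that $f^{(k)}(z) = (k+1)!\,z$ is unbounded on $\Omega$. If $\Omega$ is bounded, then $\partial\Omega$ is non-empty (as $\Omega$ is a proper non-empty open subset of $\mathbb{C}$); I pick any $\zeta_0 \in \partial\Omega$ and set $f(z) = 1/(z-\zeta_0) \in \holo$, whose $k$-th derivative equals $(-1)^k k!/(z-\zeta_0)^{k+1}$ and blows up as $z \in \Omega$ tends to $\zeta_0$.

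I do not anticipate any real obstacle: the continuity of $f \mapsto f^{(k)}(z)$ is routine from Weierstrass, and the dichotomy in Proposition \ref{eitheror} does essentially all the work once the explicit unbounded-derivative example is in hand. The only mild subtlety is making sure to quote the remark after Proposition \ref{capuT_n} to cover the case where $\Omega$ is merely open and not assumed connected.
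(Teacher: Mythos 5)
Your proof is correct, and it reaches the conclusion by a slightly different route than the paper does at this point: you deduce Proposition \ref{a_k} as a corollary of the general dichotomy in Proposition \ref{eitheror} applied to $T_k(f)=f^{(k)}$, whereas the paper's own proof here is self-contained, writing $\mathcal{A}_k=\bigcup_m A_m$ with $A_m=\{f: |f^{(k)}(z)|\leq m \text{ on } \Omega\}$ and showing directly that each $A_m$ is closed and has empty interior via the same perturbation $f+\tfrac1n g$. The two arguments are mechanically identical (same perturbation, same witness functions $z^{k+1}$ and $1/(z-\zeta_0)$), and indeed the paper itself observes at the end of Section 3 that Proposition \ref{e} and Theorem \ref{d} are immediate corollaries of Proposition \ref{capuT_n}; your write-up simply makes that observation up front. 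What your route buys is economy; what the paper's direct route buys is that it never has to worry about the hypotheses of Proposition \ref{eitheror}, which is stated for \emph{domains} $\Omega_1,\Omega_2$. On that last point, note that the remark following Proposition \ref{capuT_n} only licenses replacing the \emph{target} $\hol(\Omega_2)$ by $\mathbb{C}^X$; when $\Omega$ is disconnected the \emph{source} $\hol(\Omega_1)$ also falls outside the stated hypotheses, so you should add the (easy, but worth saying) observation that the proof of Proposition \ref{eitheror} nowhere uses connectedness of $\Omega_1$. With that caveat supplied, your argument is complete.
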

\begin{proof}
 For $m \in \mathbb{N}$, define
 \begin{equation*}
   A_m = \Big\{ f \in \hol(\Omega): |f^{(k)}(z)| \leq m, \text{ for all } z \in \Omega \Big\}
 \end{equation*}

 It is obvious that
 \begin{equation*}
 \mathcal{A}_k = \bigcup\limits_{m=1}^{+\infty} A_m
 \end{equation*}
  We will show that each $A_m$ is closed and has empty interior in $\hol(\Omega)$.
   \par
  To see that it is closed, take a sequence $\{f_n\}$ in $A_m$ and a function $f$ on $\Omega$ such that $f_n \longrightarrow f$ uniformly on compact subsets of $\Omega$. By the Weierstrass theorem we have that $f \in \hol(\Omega)$ and $f^{(k)}_n \longrightarrow f^{(k)}$ uniformly on compact subsets of $\Omega$. Therefore, for any $z \in \Omega$ we have that
  \begin{equation*}
     |f^{(k)}(z)|= \lim\limits_{n \rightarrow \infty} |f^{(k)}_n(z)| \leq m
   \end{equation*}
   i.e. $f \in A_m$. Thus, $A_m$ is closed.
   \par
   To see that $A^{\circ}_m= \emptyset$, first observe that there exists a function $g \in \hol(\Omega)$ such that $g^{(k)}$ is unbounded on $\Omega$. Indeed, if $\Omega$ is unbounded take $g(z)=z^{k+1}$ and if $\Omega$ is bounded take $\zeta_0 \in \partial \Omega$ and $g(z)= \frac{1}{z-\zeta_0}$.\\
  Now assume that there exists $f \in A_m^{\circ}$. Then $\{f+\tfrac{1}{n}g\}_n$ is a sequence in $\hol(\Omega)$ and $f+\frac{1}{n}g \longrightarrow f$ uniformly on compact subsets of $\Omega$,  $n\rightarrow \infty$. But $f \in A_m^{\circ}$, hence there exists an $n_0 \in \mathbb{N}$ such that $f+ \tfrac{1}{n_0}g \in A_m^{\circ}$. This means that
  \begin{equation*}
    |f^{(k)}(z)+\frac{1}{n_0}g^{(k)}(z)| \leq m, \text{ for all } z \in \Omega
  \end{equation*}
where the linearity of the derivative operator is used.
  But then, for any $z \in \Omega$ we would have
  \begin{align*}
    |\frac{1}{n_0}g^{(k)}(z)| &= |f^{(k)}(z)+\frac{1}{n_0}g^{(k)}(z)-f^{(k)}(z)| \\
    & \leq |f^{(k)}(z)+\frac{1}{n_0}g^{(k)}(z)| +|f^{(k)}(z)| \\
    & \leq m+m,
  \end{align*}
  Thus $|g^{(k)}(z)| \leq 2mn_0$ for all $z \in \Omega$, which is contradictory to the fact that $g^{(k)}$ is unbounded on $\Omega$. Thus, $A_m^{\circ} = \emptyset$ and the proof is complete.
\end{proof}

  \begin{proposition}
  \label{e}
  Let $\Omega \subset \mathbb{C}$ be open and non-empty. The set $\mathcal{E}$ of all functions $f \in \hol(\Omega)$ with the property that $f^{(k)}$ is unbounded on $\Omega$, for all $k \in \mathbb{N}$, is a dense $G_{\delta}$ set in $\hol(\Omega)$.
\end{proposition}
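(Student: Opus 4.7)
The plan is to combine Proposition \ref{a_k} across all $k$ using Baire's theorem. The result will follow as a more or less immediate corollary of the work done in Proposition \ref{a_0} and Proposition \ref{a_k}, with the only small point being that one needs to identify the right ``building blocks'' produced in those proofs rather than merely invoking the first-category conclusion.

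First I would write
\begin{equation*}
  \mathcal{E}^{\mathsf{c}} = \bigcup_{k=0}^{\infty} \mathcal{A}_k,
\end{equation*}
so that passing to complements yields $\mathcal{E} = \bigcap_{k=0}^\infty \mathcal{A}_k^{\mathsf{c}}$. By Propositions \ref{a_0} and \ref{a_k}, each $\mathcal{A}_k$ is expressed as the countable union
\begin{equation*}
  \mathcal{A}_k = \bigcup_{m=1}^\infty A_m^{(k)},
\end{equation*}
where every $A_m^{(k)} = \{f \in \holo : |f^{(k)}(z)| \leq m \text{ for all } z \in \Omega\}$ was shown to be closed with empty interior in $\holo$. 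Therefore
\begin{equation*}
  \mathcal{E} \;=\; \bigcap_{k=0}^\infty \bigcap_{m=1}^\infty \bigl(A_m^{(k)}\bigr)^{\mathsf{c}}
\end{equation*}
is the countable intersection of open dense subsets of $\holo$.

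To conclude, I would invoke Baire's theorem: the space $\holo$, equipped with the topology of uniform convergence on compacta, is a complete metric space, hence a Baire space, so any countable intersection of open dense sets is itself dense and $G_\delta$. This gives at once that $\mathcal{E}$ is a dense $G_\delta$ subset of $\holo$. No real obstacle arises here, since the entire analytic content---namely the construction of the unbounded ``shift'' functions $z$ or $z^{k+1}$ or $1/(z-\zeta_0)$ needed to show $(A_m^{(k)})^{\circ} = \emptyset$---has already been carried out in the preceding two propositions; the only task is to repackage their conclusions as one countable family of open dense complements and feed them into Baire's theorem.
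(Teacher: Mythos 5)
Your proposal is correct and follows essentially the same route as the paper: write $\mathcal{E}=\bigcap_{k\geq 0}\mathcal{A}_k^{\mathsf{c}}$, use the decompositions of $\mathcal{A}_k$ into closed, nowhere dense sets from Propositions \ref{a_0} and \ref{a_k} to see each $\mathcal{A}_k^{\mathsf{c}}$ is a countable intersection of open dense sets, and conclude by Baire's theorem in the complete metric space $\holo$. Your explicit unpacking into the sets $A_m^{(k)}$ is just a slightly more granular phrasing of the same argument.
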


\begin{proof}
  Using the notation previously established it is obvious that
  \begin{equation*}
    \mathcal{E}= \bigcap\limits_{k=0}^{\infty} \mathcal{A}_k^\mathrm{c}
  \end{equation*}
  By Propositions (\ref{a_0}) and (\ref{a_k}) we have that for each $k \geq 0$ , the set $\mathcal{A}_k$ is the countable union of closed, nowhere dense sets in $\hol(\Omega)$, so its complement $\mathcal{A}_k^{\mathsf{c}}$ must be a dense $G_{\delta}$ set in $\hol(\Omega)$. By Baire's Theorem, the set $\mathcal{E}$ is a dense $G_{\delta}$ set in $\hol(\Omega)$ as a countable intersection of dense $G_\d$ sets in a complete metric space.
\end{proof}

From now on, and throughout the remainder of this section, consider an $\Omega \subset \mathbb{C}$ which is non-empty, open and simply connected. Fix $\zeta_0 \in \Omega$ and, for $f \in \hol(\Omega)$ define
\begin{align*}
T(f)(z)&= \int_{\gamma_z}f(\xi) d\xi, \hspace{90pt} \text{ for all } z \in \Omega  \\
T^{(k)}(f)(z)&= \int_{\gamma_z}T^{(k-1)}(f)(\xi) d\xi, \hspace{50pt} \text{ for all } z \in \Omega, k \geq 2
\end{align*}
where $\gamma_{z}$ is any polygonal line in $\Omega $ that starts at $\zeta_0$ and ends at $z$. Since $\Omega $ is assumed to be simply connected, each $T^{(k)}$ is well-defined and holomorphic in $\Omega$ and its $k-$derivative is $f$.
\begin{proposition}
\label{integraliscontinuous}
  The operator
  \begin{align*}
  T: \hol(\Omega)&\longrightarrow \hol(\Omega)\\
  f& \mapsto T(f)
  \end{align*}
  is linear and continuous on $\hol(\Omega)$.
\end{proposition}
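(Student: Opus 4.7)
My plan is to verify linearity by an immediate appeal to the linearity of the contour integral, and then to establish continuity by producing the standard seminorm estimate: for every compact set $K \subset \Omega$ there exist a compact set $K' \subset \Omega$ and a constant $L > 0$ with
\[
\sup_{z \in K}|T(f)(z)| \le L \cdot \sup_{\xi \in K'}|f(\xi)| \quad \text{for all } f \in \hol(\Omega).
\]
Since $\hol(\Omega)$ is a Fr\'echet space whose topology is generated by the seminorms $f \mapsto \sup_{\xi \in K'}|f(\xi)|$ with $K' \subset \Omega$ compact, such an estimate is equivalent to continuity of the linear operator $T$.

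The heart of the matter is a geometric claim: for every compact $K \subset \Omega$ there exist a compact $K' \subset \Omega$ with $K \subset K'$ and a constant $L > 0$ such that every $z \in K$ can be joined to $\zeta_0$ by a polygonal path $\gamma_z \subset K'$ of length at most $L$. I would prove this by first using the fact that $\Omega$, being open and connected, is polygonally connected, so for each $z \in K$ I can pick some polygonal path $\tilde\gamma_z \subset \Omega$ from $\zeta_0$ to $z$. Since $\tilde\gamma_z$ is compact there is $r(z) > 0$ with $\dist(\tilde\gamma_z, \partial\Omega) > r(z)$. The open disks $D(z, r(z)/2)$ cover $K$; extracting a finite subcover indexed by $z_1, \dots, z_J$ and writing $r_j := r(z_j)$, I would set
\[
K' = \bigcup_{j=1}^{J}\Big(\tilde\gamma_{z_j} \cup \overline{D(z_j, r_j/2)}\Big),
\]
which is a compact subset of $\Omega$. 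For any $z \in K$ choose $j$ with $z \in D(z_j, r_j/2)$, and let $\gamma_z$ be $\tilde\gamma_{z_j}$ followed by the segment from $z_j$ to $z$; this is a polygonal path lying in $K'$ of length at most $L := \max_{j}\bigl(\mathrm{length}(\tilde\gamma_{z_j}) + r_j/2\bigr)$.

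With the claim in hand, path-independence of the integral (valid because $\Omega$ is simply connected and the integrand is holomorphic) lets us use these specific paths to compute $T(f)(z)$, and then the $ML$-inequality yields $\sup_{z \in K}|T(f)(z)| \le L \cdot \sup_{\xi \in K'}|f(\xi)|$, as required. I expect the main obstacle to be the geometric claim producing $K'$ with a \emph{uniform} length bound $L$; linearity, path-independence, and the final $ML$-estimate are then routine.
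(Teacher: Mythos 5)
Your proof is correct, but it takes a genuinely different route to the key geometric fact. Both arguments ultimately rest on the same estimate: for each compact $K\subset\Omega$ one needs paths from $\zeta_0$ to every $z\in K$ that lie in a single compact $K'\subset\Omega$ and have uniformly bounded length, after which the $ML$-inequality finishes the job. The paper obtains this by a case split: if $\Omega=\mathbb{C}$ it uses straight segments $[\zeta_0,z]$, and if $\Omega\neq\mathbb{C}$ it invokes the Riemann Mapping Theorem, takes segments in a closed disk $\overline{D(0,r)}\subset\mathbb{D}$, and pushes them forward by the conformal map $\phi$, bounding the lengths via $\max\{|\phi'|\colon \overline{D(0,r)}\}$. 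You instead build the paths directly by a finite-subcover argument: choose one polygonal path $\tilde\gamma_{z_j}$ to each of finitely many centers $z_j$ and append a short segment inside $\overline{D(z_j,r_j/2)}$. Your construction is more elementary (no Riemann map), handles $\Omega=\mathbb{C}$ and $\Omega\neq\mathbb{C}$ uniformly, and in fact only uses that $\Omega$ is open and connected --- simple connectivity enters solely through the path-independence that makes $T$ well defined, which you correctly flag. The paper's version is shorter to write down once the Riemann map is admitted, and produces rectifiable (not necessarily polygonal) images of segments, which is equally adequate for the $ML$-bound. Also, you phrase continuity as a seminorm estimate $\sup_{K}|T(f)|\le L\sup_{K'}|f|$ rather than verifying sequential continuity on a convergent sequence as the paper does; for a linear map on the Fr\'echet space $\holo$ these are equivalent, and your formulation makes the uniformity of the bound explicit. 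No gaps.
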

\begin{proof}
  The linearity of $T$ is obvious from the linearity of the integral. For the continuity, take a sequence $\{f_n\}$ in $\hol(\Omega)$ and a function $f$ on $\Omega$ such that $f_n \longrightarrow f$ uniformly on compact subsets of $\Omega$. By the Weierstrass theorem we have that $f \in \hol(\Omega)$. We must show that $T(f_n)\longrightarrow T(f)$ on compact subsets of $\Omega$. \par
  Let $K$ be a compact subset of $\Omega$. Either $\Omega =\mathbb{C}$ or $\Omega \neq \mathbb{C}$.\par
   In the first case, i.e. $\Omega = \mathbb{C}$, for $z \in K$ we take $\gamma_z$ to be the line segment $[\zeta_0,z]$. Set $M=\max\{|\zeta_0|, \max\limits_{z \in K}|z|\}$ and observe that $M$ is well defined and finite because $K$ is compact in $\mathbb{C}$. Define $L=\overline{D(0,M)}= \{z \in \mathbb{C}: |z| \leq M\} $. Then $L$ is compact in $\mathbb{C}$, $K \subset L$ and $\gamma_z \subset L$, for all $z \in K$. Therefore, for $z\in K$ we have
  \begin{align*}
    |T(f_n)(z)-T(f)(z)| &= \big| \int_{\gamma_z}f_n(\xi)d\xi-\int_{\gamma_z}f(\xi)d\xi \hspace{3pt}\big| \\
   & = \big|\int_{\gamma_z}(f_n(\xi)-f(\xi))d\xi \hspace{3pt}\big|\\
    & \leq \|f_n-f\|_L \hspace{5pt} |z-\zeta_0|\\
    & \leq 2 M \|f_n-f\|_L
  \end{align*}
 Thus $\|T(f_n)-T(f)\|_K \leq 2M \|f_n-f\|_L \longrightarrow 0 $, $n\rightarrow \infty$. \par
 In the second case, i.e. $\Omega \neq \mathbb{C}$, since $\Omega $ is a simply connected domain, by the Riemann Mapping Theorem there exists an analytic function $\phi :\mathbb{D} =\{z \in \mathbb{C}: |z|<1\} \longrightarrow \mathbb{C}$ such that $\phi$ is univalent and $\phi(\mathbb{D})= \Omega$. Obviously $\phi$ is a homeomorphism between $\mathbb{D}$ and $\Omega$. Since the set $\{\zeta_0\}\cup K \subset \Omega$ is compact, the set $\phi ^{-1}(\{\zeta_0\}\cup K)\subset \mathbb{D}$ is also compact. Therefore, there exists an $r$, with $0<r<1$, such that $\phi ^{-1}(\{\zeta_0\}\cup K)\subset \overline{D(0,r)}=\{z \in \mathbb{C}: |z| \leq r\}$. Define $L=\phi(\overline{D(0,r)})\subset \phi(\mathbb{D})=\Omega$. Then $L$ is compact and $K\subset L$. For $z \in K$ we have that $\phi^{-1}(\zeta_o)$, $\phi^{-1}(z) \in \overline{D(0,r)}$, hence the line segment $[\phi^{-1}(\zeta_o),\phi^{-1}(z)] \subset \overline{D(0,r)}$. Therefore, if $\sigma: [0,1]\longrightarrow \mathbb{C}$ is a parametrization of $[\phi^{-1}(\zeta_o),\phi^{-1}(z)]$, then $Length(\sigma) \leq 2r$.
 Take $\gamma_z= \phi([\phi^{-1}(\zeta_o),\phi^{-1}(z)])\subset \phi(\overline{D(0,r)})=L$ and observe that $\gamma_z$ is rectifiable: $\phi \circ \sigma :[0,1]\longrightarrow \Omega $ is a parametrization of $\gamma_z$ and
 \begin{align*}
 Length(\gamma_z)&= \int_{0}^{1} |\gamma_{z}^{'}(t)|dt \\ &= \int_{0}^{1}|(\phi \circ \sigma)^{'}(t)|dt \\
 &= \int_{0}^{1}|(\phi ^{'}(\sigma(t))|\hspace{2pt}|\sigma ^{'}(t)|dt  \\
 & \leq \max \big\{|\phi ^{'}(z)|: z \in \overline{D(0,r)} \big\}  \hspace{5pt}
  Length(\sigma) \\
  & \leq \max \big\{|\phi ^{'}(z)|: z \in \overline{D(0,r)} \big\}  \hspace{5pt} 2r
 \end{align*}
 which is of course finite because $\phi ^{'}$ is continuous on the compact set $\overline{D(0,r)}$.
 \par
 We then have
 \begin{align*}
   |T(f_n)(z)-T(f)(z)| &= \big| \int_{\gamma_z}f_n(\xi)d\xi-\int_{\gamma_z}f(\xi)d\xi \hspace{3pt}\big| \\
   & = \big|\int_{\gamma_z}(f_n(\xi)-f(\xi))d\xi \hspace{3pt}\big|\\
    & \leq \|f_n-f\|_L \hspace{5pt} Length(\gamma_z)\\
    & \leq \|f_n-f\|_L \hspace{5pt} \max \big\{|\phi ^{'}(z)|: z \in \overline{D(0,r)} \big\}  \hspace{5pt} 2r
 \end{align*}

  Thus $\|T(f_n)-T(f)\|_K \leq \|f_n-f\|_L \hspace{5pt} \max \big\{|\phi ^{'}(z)|: z \in \overline{D(0,1)} \big\}  \hspace{5pt} 2r \longrightarrow 0 $, $n\rightarrow \infty$.\\
  In any case we have shown that $T(f_n)\longrightarrow T(f)$ uniformly on $K$. Since $K$ was an arbitrary compact subset of $\Omega$, the continuity of $T$ follows.
  \end{proof}
\begin{corollary}
\label{kprimitiveiscont}
  Let $k \geq 1$. The operator
  \begin{align*}
  T^{(k)}: \hol(\Omega)&\longrightarrow \hol(\Omega)\\
  f& \mapsto T^{(k)}(f)
  \end{align*}
  is linear and continuous on $\hol(\Omega)$.
\end{corollary}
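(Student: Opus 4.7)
The plan is to proceed by induction on $k$, leveraging the definition of $T^{(k)}$ as an iterated integral. The base case $k=1$ is precisely the content of Proposition \ref{integraliscontinuous}, so nothing new is needed there.

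For the inductive step, I would assume that $T^{(k-1)}: \hol(\Omega) \to \hol(\Omega)$ is linear and continuous, and then observe that the defining formula
\begin{equation*}
T^{(k)}(f)(z) = \int_{\gamma_z} T^{(k-1)}(f)(\xi)\, d\xi
\end{equation*}
is exactly $T\bigl(T^{(k-1)}(f)\bigr)(z)$, so that $T^{(k)} = T \circ T^{(k-1)}$ as operators on $\hol(\Omega)$. Linearity is then immediate from the linearity of $T$ and $T^{(k-1)}$, and continuity follows because the composition of two continuous maps between topological spaces is continuous.

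The only step that requires some care is verifying that the identification $T^{(k)} = T \circ T^{(k-1)}$ is legitimate, i.e., that the integrand $T^{(k-1)}(f)$ lies in $\hol(\Omega)$ so that $T$ can be applied to it; this is part of the inductive hypothesis (or, more precisely, part of the earlier remark that each $T^{(k)}$ is well-defined and holomorphic in $\Omega$). I expect no real obstacle here, as the main analytic work—estimating the integral along a rectifiable polygonal path via the Riemann map when $\Omega \ne \mathbb{C}$, or along a line segment when $\Omega = \mathbb{C}$—has already been carried out in Proposition \ref{integraliscontinuous}.
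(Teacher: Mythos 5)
Your proposal is correct and matches the paper's own argument, which simply notes that $T^{(k)} = T \circ T \circ \cdots \circ T$ ($k$ times) and invokes Proposition \ref{integraliscontinuous}; your induction merely makes this composition identity explicit. No issues.
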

\begin{proof}
We have that $T^{(k)}= T \circ T \circ ... \circ T$, the composition of $T$ $k$ times. Therefore linearity and continuity both follow by Proposition (\ref{integraliscontinuous}).
\end{proof}

\begin{corollary}
\label{pointwisekprimitive}
  If $f_n \longrightarrow f$ uniformly on compact subsets of $\Omega$ and $k \geq 1$, then $T^{(k)}(f_n) \longrightarrow T^{(k)}(f)$ pointwise in $\Omega$.
\end{corollary}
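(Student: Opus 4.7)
The plan is to read this off directly from Corollary \ref{kprimitiveiscont}, since pointwise convergence is strictly weaker than convergence in the topology of $\hol(\Omega)$ (uniform convergence on compacta), which is exactly what the continuity of $T^{(k)}$ gives us.

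More precisely, I would argue as follows. The hypothesis $f_n \longrightarrow f$ uniformly on compact subsets of $\Omega$ is exactly the statement that $f_n \longrightarrow f$ in the topology of $\hol(\Omega)$. By Corollary \ref{kprimitiveiscont}, the operator $T^{(k)}$ is continuous on $\hol(\Omega)$, and therefore $T^{(k)}(f_n) \longrightarrow T^{(k)}(f)$ in $\hol(\Omega)$, which means uniformly on compact subsets of $\Omega$. In particular, for each fixed $z \in \Omega$, the singleton $\{z\}$ is a compact subset of $\Omega$, so $T^{(k)}(f_n)(z) \longrightarrow T^{(k)}(f)(z)$, which is the claimed pointwise convergence.

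There is no real obstacle here; the work was already done in establishing Proposition \ref{integraliscontinuous} and its corollary. The only thing worth noting is that the conclusion of the corollary could in fact be strengthened from pointwise convergence to uniform convergence on compact subsets of $\Omega$, but the weaker pointwise statement is presumably all that is needed for subsequent arguments in the paper.
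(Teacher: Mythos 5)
Your proposal is correct and follows exactly the paper's own argument: apply the continuity of $T^{(k)}$ from Corollary \ref{kprimitiveiscont} to get uniform convergence on compacta, and then pass to the weaker pointwise convergence. Nothing is missing.
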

\begin{proof}
  By the Weierstrass Theorem, $f \in \hol(\Omega)$. By Corollary (\ref{kprimitiveiscont}) we have that $T^{(k)}(f_n) \longrightarrow T^{(k)}(f)$ uniformly on compact subsets of $\Omega$ and therefore $T^{(k)}(f_n) \longrightarrow T^{(k)}(f)$ pointwise in $\Omega$.
\end{proof}
\begin{proposition}
\label{b_k}
  Let $\Omega \subset \mathbb{C}$ be a simply connected domain and $k \geq 1$. The set $\mathcal{B}_k$ of all $f \in \hol(\Omega)$ such that $T^{(k)}(f)$ is bounded on $\Omega$ is a set of the first category in $\hol(\Omega)$.
\end{proposition}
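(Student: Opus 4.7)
The plan is to invoke Proposition~\ref{eitheror} with $\Omega_1=\Omega_2=\Omega$ and the operator $T^{(k)}:\hol(\Omega)\to\hol(\Omega)$. By Corollary~\ref{kprimitiveiscont}, $T^{(k)}$ is linear and continuous, so in particular the evaluation $f\mapsto T^{(k)}(f)(z)$ is continuous on $\hol(\Omega)$ for every fixed $z\in\Omega$. The hypothesis of Proposition~\ref{eitheror} is therefore satisfied, and it tells us that $\mathcal{B}_k^{\mathsf{c}}=\mathcal{U}_{T^{(k)}}$ is either empty or a dense $G_\d$ subset of $\hol(\Omega)$. In the latter case $\mathcal{B}_k$ is the complement of a dense $G_\d$, hence an $F_\sigma$ of the first category, exactly as claimed. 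Thus the whole proof reduces to exhibiting a single $g\in\hol(\Omega)$ such that $T^{(k)}(g)$ is unbounded on $\Omega$.

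For this witness I would mimic the case split used in Proposition~\ref{a_k}. If $\Omega$ is unbounded, take $g\equiv 1$; unwinding the definition of $T$ starting at $\zeta_0$ inductively gives $T^{(k)}(g)(z)=(z-\zeta_0)^k/k!$, which is unbounded on any unbounded domain. If $\Omega$ is bounded, pick $\zeta_1\in\partial\Omega$ (nonempty since $\Omega\neq\mathbb{C}$) and set
\begin{equation*}
g(z)=\frac{1}{(z-\zeta_1)^{k+1}},
\end{equation*}
which lies in $\hol(\Omega)$ because $\zeta_1\notin\Omega$. Applying $T$ once produces $-\tfrac{1}{k}(z-\zeta_1)^{-k}$ plus a constant, and a short induction on the number of applied integrations shows that
\begin{equation*}
T^{(k)}(g)(z)=\frac{(-1)^k}{k!}\,\frac{1}{z-\zeta_1}+P(z-\zeta_0),
\end{equation*}
where $P$ is a polynomial of degree at most $k-1$ arising from the accumulated constants of integration.

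The only genuine obstacle is verifying this last formula carefully: I must check that the polynomial correction produced by integrating from $\zeta_0$ at each stage cannot conspire to cancel the simple pole at $\zeta_1$. This is transparent because each application of $T$ strictly lowers the order of the pole by one, and the only non-polynomial contribution at each stage is the integral of the existing pole, which is again a pole of lower order; since the starting order is $k+1\geq 2$, no logarithm is ever produced, and after exactly $k$ steps a simple pole at $\zeta_1$ remains. On bounded $\Omega$ the polynomial $P$ is bounded, while $(z-\zeta_1)^{-1}$ blows up as $z\to\zeta_1$ within $\Omega$, so $T^{(k)}(g)$ is unbounded. This produces the required witness, and the proposition follows from Proposition~\ref{eitheror}. (Equivalently, one may give a direct proof by imitating Proposition~\ref{a_k} verbatim, writing $\mathcal{B}_k=\bigcup_{m\geq 1}B_m$ with $B_m=\{f:\|T^{(k)}(f)\|_\Omega\leq m\}$, using Corollary~\ref{kprimitiveiscont} for closedness and the same $g$ together with the perturbation $f+\tfrac{1}{n}g$ for empty interior.)
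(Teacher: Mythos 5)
Your proof is correct and follows essentially the same route as the paper: the paper's own proof writes $\mathcal{B}_k=\bigcup_{m}B_m$ and runs the perturbation argument $f+\tfrac{1}{n}g$ inline with exactly the witnesses you chose ($g\equiv 1$ for unbounded $\Omega$, $g(z)=(z-\zeta_1)^{-(k+1)}$ with $\zeta_1\in\partial\Omega$ for bounded $\Omega$), while you delegate that argument to Proposition~\ref{eitheror} --- a packaging the paper itself endorses in the remark following Theorem~\ref{d}. Your check that $k$-fold integration of $(z-\zeta_1)^{-(k+1)}$ leaves $\tfrac{(-1)^k}{k!}(z-\zeta_1)^{-1}$ plus a bounded polynomial, with no logarithm appearing, is sound.
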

\begin{proof}
  For $m \in \mathbb{N}$, define
  \begin{equation*}
    B_m= \big\{f \in \hol(\Omega): |T^{(k)}(f)(z)| \leq m \hspace{5pt} \text{ for all } z \in \Omega \big\}
  \end{equation*}
  Then $\mathcal{B}_k= \bigcup\limits_{m=1}^{\infty}B_m$. We will show that each $B_m$ is closed and nowhere dense in $\hol(\Omega)$. \par
  To see that it is closed, take a sequence $\{f_n\}$ in $B_m$ such that $f_n \longrightarrow f$ uniformly on compact subsets of $\Omega$. By Corollary (\ref{pointwisekprimitive}), $T^{(k)}(f_n) \longrightarrow T^{(k)}(f)$ pointwise in $\Omega$. Therefore, for $z \in \Omega$ we have that
  \begin{align*}
    |T^{(k)}(f)(z)| & \leq |T^{(k)}(f)(z)-T^{(k)}(f_n)(z)| + |T^{(k)}(f_n)(z)|\\
    & \leq |T^{(k)}(f)(z)-T^{(k)}(f_n)(z)| +m
  \end{align*}
  Taking $n \rightarrow \infty$ we obtain $|T^{(k)}(f)(z)| \leq m$ and therefore $f \in B_m$. Thus, $B_m$ is closed.
  \par
  To see that $B_m ^{\circ}= \emptyset$, first observe that there exists a function $g \in \hol(\Omega)$ such that $T^{(k)}(g)$ is unbounded on $\Omega$: indeed, if $\Omega $ is unbounded take $g(z)=1$, $z\in \Omega$, and if $\Omega $ is bounded take $\zeta_0 \in \partial \Omega$ and $g(z)=\frac{1}{(z-\zeta_0)^{k+1}}$. Now assume that $f \in B_m^{\circ}$. Then $f+\tfrac{1}{n} \hspace{2pt}g \longrightarrow f$ uniformly on compact subsets of $\Omega$, $n \rightarrow \infty$. Therefore, there exists an $n_0$ such that $f+\tfrac{1}{n_0} \hspace{2pt}g \in B_m$. By the linearity of $f \mapsto T^{(k)}(f)$ this means that
  \begin{equation*}
    |T^{(k)}(f)(z)+\tfrac{1}{n_0} \hspace{2pt} T^{(k)}(g)(z)|=|T^{(k)}(f+\tfrac{1}{n_0} \hspace{2pt}g)(z)|\leq m
  \end{equation*}
  for all $z \in \Omega$. But then
  \begin{align*}
    \tfrac{1}{n_0} \hspace{2pt}|T^{(k)}(g)(z)| & \leq |T^{(k)}(f)(z)+\tfrac{1}{n_0} \hspace{2pt} T^{(k)}(g)(z)|+|T^{(k)}(f)(z)| \\
    & \leq m+m
  \end{align*}
  or $|T^{(k)}(g)(z)| \leq 2mn_0$, for all $z \in \Omega$, which is contradictory to the fact that $T^{(k)}(g)$ is unbounded on $\Omega$. Thus, $B_m^{\circ} = \emptyset $ and the proof is complete.
\end{proof}
For $f \in \holo$, where $\Omega \subset \mathbb{C}$ is a simply connected domain, we denote
\[
f^{(k)} =
\begin{cases}
\text{the } k^{th} \text{ derivative of }f, & \text{if } k>0\\
f, & \text{if } k=0\\
T^{(-k)}(f),& \text{if } k<0
\end{cases}
\]
where $T^{(k)}(f)$ as defined above. Collecting all the above results together we get
\begin{theorem}
\label{d}
Let $\Omega \subset \mathbb{C}$ be a simply connected domain. Then
  the set $\mathcal{D}$ of all functions $f \in \hol(\Omega)$ with the property that $f^{(k)}$ is unbounded on $\Omega$ for all $k \in \mathbb{Z}$ is a dense $G_\delta$ subset of $\hol(\Omega)$.
\end{theorem}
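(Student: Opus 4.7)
The plan is to recognize that Theorem \ref{d} is essentially a bookkeeping exercise built on top of Propositions \ref{a_0}, \ref{a_k}, and \ref{b_k}, tied together by Baire's theorem. First I would rewrite the set $\mathcal{D}$ as a countable intersection indexed by $\mathbb{Z}$: for $k \geq 0$ the condition that $f^{(k)}$ is unbounded is precisely $f \in \mathcal{A}_k^{\mathsf{c}}$, while for $k < 0$ the condition that $f^{(k)} = T^{(-k)}(f)$ is unbounded is precisely $f \in \mathcal{B}_{-k}^{\mathsf{c}}$. Thus
\begin{equation*}
\mathcal{D} = \Bigl(\bigcap_{k=0}^{\infty} \mathcal{A}_k^{\mathsf{c}}\Bigr) \cap \Bigl(\bigcap_{k=1}^{\infty} \mathcal{B}_k^{\mathsf{c}}\Bigr),
\end{equation*}
a countable intersection.

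Next I would invoke Proposition \ref{a_0} for $k=0$, Proposition \ref{a_k} for each $k \geq 1$, and Proposition \ref{b_k} for each $k \geq 1$ (here the simple connectedness of $\Omega$ is used, since it is needed to make the iterated antiderivative $T^{(k)}$ well-defined). Each of the sets $\mathcal{A}_k$ and $\mathcal{B}_k$ is exhibited in those proofs as a countable union $\bigcup_{m \geq 1} A_m$ or $\bigcup_{m \geq 1} B_m$ of closed, nowhere dense subsets of $\hol(\Omega)$. Taking complements, each $\mathcal{A}_k^{\mathsf{c}}$ and $\mathcal{B}_k^{\mathsf{c}}$ is a countable intersection of open dense subsets of $\hol(\Omega)$, hence a dense $G_\delta$ set.

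Finally I would close by applying Baire's theorem: $\hol(\Omega)$, endowed with the topology of uniform convergence on compacta, is a complete metric space, so a countable intersection of dense $G_\delta$ sets remains a dense $G_\delta$ set. Since $\mathcal{D}$ is such a countable intersection, the conclusion follows. There is no real obstacle here; the only thing to be careful about is the small notational point that $\mathcal{D}$ is indexed by all of $\mathbb{Z}$, which splits naturally into the nonnegative part (derivatives and the function itself) handled by Propositions \ref{a_0} and \ref{a_k}, and the strictly negative part (the iterated antiderivatives $T^{(k)}$) handled by Proposition \ref{b_k}, so that the union $\{k \in \mathbb{Z}\}$ is still countable and Baire applies.
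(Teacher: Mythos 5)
Your proposal is correct and follows essentially the same route as the paper: decompose $\mathcal{D}$ as the countable intersection over $k\in\mathbb{Z}$ of the sets where $f^{(k)}$ is unbounded, identify each as a dense $G_\delta$ via Propositions (\ref{a_0}), (\ref{a_k}) and (\ref{b_k}) (whose complements are countable unions of closed nowhere dense sets), and conclude with Baire's theorem in the complete metric space $\hol(\Omega)$. No gaps.
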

\begin{proof}
  For $k \in \mathbb{Z}$ define
  \begin{equation*}
    D_k = \big\{ f \in \hol(\Omega): f^{(k)}  \hspace{3pt} \text{ unbounded on } \Omega \big\}
  \end{equation*}
  Then $\mathcal{D} = \bigcap \limits_{k \in \mathbb{Z}}D_k$. By Propositions (\ref{a_0}), (\ref{a_k}) and (\ref{b_k}) we have that each $D_k$ is a dense $G_\delta$ set in $\hol(\Omega)$, because its complement is a countable union of closed, nowhere dense sets in $\hol(\Omega)$. Since $\hol(\Omega)$ is a complete metric space, Baire's Theorem gives that any countable intersection of dense $G_\delta $ sets is again a dense $G_\delta $ set.
\end{proof}
At this point observe that Proposition (\ref{e}) and Theorem (\ref{d}) are immediate corollaries to Proposition (\ref{capuT_n}):\\
 The operator
 \begin{align*}
   \Lambda: \holo & \rightarrow \holo \\
   f& \mapsto f^{'}
 \end{align*}

  is linear and continuous by the Weierstrass Theorem.\\
   If additionally $\Omega$ is simply connected, the same holds for the operator
  \begin{align*}
   \widetilde{ \Lambda}: \holo & \rightarrow \holo \\
   f& \mapsto \int_{\gamma_z}f(\xi)d\xi
  \end{align*}
    by Proposition (\ref{integraliscontinuous}), the primitive of $f$ being defined as in the discussion preceding that same  Proposition.
    \par
     Now define $\Lambda _k$ to be $k$ compositions of $\Lambda $ with itself, $k \geq 1$, $\Lambda_0$ to be the identity function on $\holo$ and $\Lambda_k$ to be $(-k)$ compositions of $\widetilde{\Lambda}$ with itself, $k \leq -1$. Then each $\Lambda_k$ is linear and continuous in $\holo$ and, furthermore, $\mathcal{U}_{\Lambda_k} \neq \emptyset$, for all $k \in \mathbb{Z}$. Therefore, the set $\bigcap\limits_{k \in \mathbb{Z}}\mathcal{U}_{\Lambda_k}$ is a dense $G_\delta $ subset of $\holo$. But this is exactly the set $\mathcal{D}$ of Theorem (\ref{d}).\\
\section{Universality of operators related to the partial sums}

Now assume that $\Omega$ is a domain in $\mathbb{C}$. For $N \geq 0$ we define:
\begin{align*}
 S_N: \holo & \rightarrow \hol(\Omega \times \mathbb{C}) \\
 f & \mapsto S_N(f, \cdot)(\cdot)= S_N(f)
\end{align*}
where
\begin{equation*}
  S_N(f,\zeta)(z)= \sum_{n=0}^{N}\frac{f^{(n)}(\zeta)}{n!}(z- \zeta)^n, \hspace{5pt} \zeta \in \Omega, z\in \mathbb{C}
\end{equation*}
Then $S_N$ is obviously linear. By the Weierstrass Theorem it is also continuous; indeed suppose $K=K_1 \times K_2$ is a compact subset of $\Omega \times \mathbb{C}$, where $K_1, K_2$ are compact subsets of $\Omega$ and $\mathbb{C}$ respectively, and $f_k \longrightarrow f$ uniformly on compact subsets of $\Omega$. Set $M = \max\limits_{(\zeta,z)\in K}|z- \zeta|$. Then, for $(\zeta,z) \in K$ we have that
\begin{align*}
  | S_N(f_k,\zeta)(z)-S_N(f,\zeta)(z) |&= \Big|\sum_{n=0}^{N}\frac{f_k^{(n)}(\zeta)-f^{(n)}(\zeta)}{n!}(z-\zeta)^n \Big|\\
  & \leq \sum_{n=0}^{N}\frac{|f_k^{(n)}(\zeta)-f^{(n)}(\zeta)|}{n!}|z-\zeta|^n \\
  & \leq \sum_{n=0}^{N} \frac{\|f_k^{(n)}-f^{(n)}\|_{K_1}}{n!}M^n
\end{align*}
 which means that
 \begin{equation*}
   \|S_N(f_k)-S_N(f)\|_{K} \leq \sum_{n=0}^{N} \frac{\|f_k^{(n)}-f^{(n)}\|_{K_1}}{n!}M^n
 \end{equation*}
  and therefore $S_N(f_k) \longrightarrow S_N(f)$ uniformly on $K$, for each $N=0,1,2,...$

Now fix $\zeta_0 \in \Omega$ and, for $N \geq 0$, define
\begin{align*}
  T_N: \holo & \rightarrow \hol(\mathbb{C}) \\
  f & \mapsto S_N(f,\zeta_0)(\cdot)
\end{align*}
Then each $T_N$ is linear and continuous in $\holo$ and
\begin{equation*}
  \mathcal{U}_{T_N} = \big\{ f \in \holo: S_N(f, \zeta_0) \text{ is unbounded in } \mathbb{C}\big\}
\end{equation*}
But $S_N(f, \zeta_0)$ is a polynomial, so it is bounded in $\mathbb{C}$ if and only if it is constant in $\mathbb{C}$. Therefore
\begin{equation*}
  \mathcal{U}_{T_N} = \big\{ f \in \holo: S_N(f, \zeta_0) \text{ is non-constant in } \mathbb{C}\big\}
\end{equation*}
For $N=0$ we have that $S_N(f,\zeta_0)(z)= f(\zeta_0)$, $z \in \mathbb{C}$, so $\mathcal{U_{T_N}}= \emptyset$. \\
for $N \geq 1$, we have that
\begin{equation*}
  S_N(f,\zeta_0)(z)= \sum_{n=0}^{N}\frac{f^{(n)}(\zeta_0)}{n!}(z- \zeta_0)^n
\end{equation*}

is constant if and only if $f^{'}(\zeta_0)= f^{''}(\zeta_0)=...=f^{(N)}(\zeta_0)=0$. But there always exists a function $f \in \holo$ such that $f^{(k)}(\zeta_0) \neq 0$, for all $k \in \mathbb{N}$, for example $f(z)= e^z$. Therefore, $\mathcal{U}_{T_N} \neq \emptyset$, for all $N \geq 1$. By Proposition (\ref{capuT_n}) we have that the set $\bigcap\limits_{N=1}^{\infty}\mathcal{U}_{T_N}$ of all the functions $f \in \holo$ with the property that the function $S_N(f, \zeta_0)$ is unbounded in $\mathbb{C}$ for all $N \geq 1$, is a dense $G_\d$ set in $\holo$. \\
\par
We mention that $\mathcal{U}_{T_1}$ is an open dense set in $\hol(\Omega)$ because $\mathcal{U}_{T_1}= \{f \in \holo : f^{'}(\zeta_0) \neq 0\}$. Similarly, $\mathcal{U}_{T_N} $ is also an open dense set in $\holo$, so $\bigcap\limits_{N=1}^{\infty} \mathcal{U}_{T_N}$ is $G_\d$ dense in $\holo$. So this corollary of Proposition (\ref{capuT_n}) is well known and obvious. A similar result holds if we replace $\mathbb{C}$ by any unbounded domain $\Omega_2$; in particular this holds for $\Omega_2 = \Omega$ if $\Omega$ is unbounded.
\par
Now fix $z=0$ and, for $N \geq 0$, define
\begin{align*}
  \ttn :\holo & \rightarrow \holo \\
  f & \mapsto S_N(f, \cdot)(0)
\end{align*}
Each $\ttn$ is linear and continuous in $\holo$. \par
For $N=0$, we have that $S_0(f,\zeta)(0)=f(\zeta)$, $\zeta \in \Omega$, and therefore
\begin{equation*}
  \mathcal{U}_{\ttn}= \big\{ f\in \holo: f \text{ is unbounded in } \Omega \big\}
\end{equation*}
which is a dense $G_\d$ set in $\holo $ by Proposition (\ref{a_0}). \par
For $N \geq 1$, if $\Omega = \mathbb{C}$, take $f(z)= e^z$, $z \in \mathbb{C}$. Since $z \mapsto e^z$ dominates the polynomials in $\mathbb{C}$, we have that $S_N(f,\zeta)(0)$ is unbounded in $\mathbb{C}$. If $\Omega \neq \mathbb{C}$, take $\zeta_0 \in \partial \Omega$ and $f(z)= \tfrac{1}{z-\zeta_0}$, $z \in \Omega$. Then $f \in \holo$ and
\begin{equation*}
  S_N(f,\zeta)(0)= \sum_{n=0}^{N} \frac{\zeta ^n}{(\zeta-\zeta_0)^{n+1}}, \hspace{5pt} \zeta \in \Omega
\end{equation*}
which is a rational function with poles only at $z= \zeta_0$. Hence $\lim \limits_{\zeta\rightarrow\zeta_0}|S_N(f,\zeta)(0)| = \infty$ and $S_N(f,\cdot)(0)$ is unbounded in $\Omega$.\\
Therefore, $\mathcal{U}_{\ttn} \neq \emptyset$ for all $N \geq 0$, so by Corollary (\ref{capuT_n}) we have that the set $\bigcap\limits_{N=0}^{\infty}\mathcal{U}_{\ttn}$ of all functions $f \in \holo$ with the property that $S_N(f,\cdot)(0)$ is unbounded in $\Omega$ for all $N \geq 0$, is a dense $G_\d$ set in $\holo$.
\par
Next we consider the following class $\so$ of functions on $\Omega$:
\begin{definition}
  Let $\Omega$ be an open, non-empty subset of $\mathbb{C}$. We define $\so$ to be the set of all functions $f \in \holo$ such that $\big\{  \ttn(f)\big\}_{N \geq 0}$ is dense in $\holo$.
\end{definition}
\label{so def}
From now on and unless otherwise stated we assume that $\Omega$ is a simply connected domain in $\mathbb{C}$.
Our goal is to show that either $\so = \emptyset$ or $\so$ is a dense $G_\d$ set in $\holo$. To this end, first observe that,  $\holo$ is separable: the set $\{p_j\}_j$ of all polynomials with coefficients having rational coordinates is dense in $\holo$ by the Runge Theorem. Now consider an exhaustive sequence $\{K_m\}_m$ of compact subsets of $\Omega$, i.e. a sequence $\{K_m\}_m$ of compact subsets of $\Omega$ such that
\begin{enumerate}
  \item $\Omega = \bigcup\limits_{m=1}^{\infty}K_m $
  \item $K_m$ lies in the interior of $K_{m+1}$, for $m=1,2,...$
  \item Every compact subset of $\Omega$ lies in some $K_m$
  \item Every component of $K_m^{\mathsf{c}}$ contains a component of $\Omega^{\mathsf{c}}$, $m=1,2,...$
\end{enumerate}
(See \cite{Rudin:1987:RCA:26851})
\par
Now we can show that $\so$ can be expressed as a set which will be shown to be a $G_\d$ one in $\holo$:
\begin{proposition}
\label{so=}
 $ \so = \bigcap\limits_{s,j,m=1}^{\infty}\bigcup\limits_{N=0}^{\infty}\big\{ f \in \holo: \sup\limits_{\zeta \in K_m}|\ttn (f)(\zeta)-p_j(\zeta)| < \frac{1}{s}\big\}$
\end{proposition}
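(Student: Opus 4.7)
The plan is to prove the stated equality by double inclusion, using the two standard facts already established in the paper: the exhaustive sequence $\{K_m\}$ of compact subsets of $\Omega$ allows one to test uniform convergence on compacta of $\Omega$ via convergence on each $K_m$, and the countable family $\{p_j\}$ of polynomials with coefficients whose real and imaginary parts are rational is dense in $\holo$ by Runge's theorem.

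For the inclusion $\so \subseteq \bigcap_{s,j,m} \bigcup_N \{\cdots\}$, I would start by fixing $f \in \so$ and arbitrary integers $s, j, m \geq 1$. Because the sequence $\{\ttn(f)\}_{N \geq 0}$ is dense in $\holo$, and because convergence in the topology of $\holo$ is stronger than uniform convergence on the particular compact set $K_m \subset \Omega$, I can approximate $p_j \in \holo$ to within $1/s$ on $K_m$ by some $\ttn(f)$. That is, there exists $N \geq 0$ with $\sup_{\zeta \in K_m}|\ttn(f)(\zeta)-p_j(\zeta)| < 1/s$, which is exactly membership in the set on the right.

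For the reverse inclusion, I would take $f$ in the right-hand side and show $\{\ttn(f)\}_{N \geq 0}$ is dense in $\holo$. Fix an arbitrary $g \in \holo$, an arbitrary compact $K \subset \Omega$, and $\varepsilon > 0$. By property (3) of the exhaustive sequence $\{K_m\}$, there exists $m$ with $K \subset K_m$. By density of $\{p_j\}_j$ in $\holo$, there exists $j$ such that $\sup_{\zeta \in K_m}|g(\zeta)-p_j(\zeta)| < \varepsilon/2$. Choose $s \geq 1$ with $1/s < \varepsilon/2$. By the hypothesis applied to this triple $(s,j,m)$, there exists $N \geq 0$ with $\sup_{\zeta \in K_m}|\ttn(f)(\zeta)-p_j(\zeta)| < 1/s < \varepsilon/2$. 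A triangle inequality then yields
\begin{equation*}
\sup_{\zeta \in K}|\ttn(f)(\zeta)-g(\zeta)| \leq \sup_{\zeta \in K_m}|\ttn(f)(\zeta)-g(\zeta)| < \varepsilon,
\end{equation*}
which proves $f \in \so$.

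There is no real obstacle: both directions are routine manipulations given the separability of $\holo$ via $\{p_j\}$ and the exhaustion by $\{K_m\}$. The only point worth flagging is making sure the quantifiers line up, namely that the countable index set $(s,j,m)$ indeed suffices to encode the uncountably many pairs $(g, K, \varepsilon)$ one would a priori have to test; this is precisely what separability of $\holo$ and the exhaustive sequence combine to give us.
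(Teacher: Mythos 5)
Your proposal is correct and follows essentially the same route as the paper: the forward inclusion is read off directly from the definition of density in $\holo$, and the reverse inclusion uses the exhaustion $\{K_m\}$, the density of $\{p_j\}$, and a triangle inequality in exactly the same way (the paper splits the error as $\tfrac{1}{2s}+\tfrac{1}{2s}$ rather than $\tfrac{\varepsilon}{2}+\tfrac{\varepsilon}{2}$, which is immaterial). Your closing remark about the countable indices $(s,j,m)$ encoding all pairs $(g,K,\varepsilon)$ is precisely the point of the proposition.
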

\begin{proof}
  That $\so$ is a subset of the set on the right is an immediate consequence of the definition of $\so$.
   \par
   Consider now a function $f$ in the set on the right, a function $g \in \holo$, a compact subset $K$ of $\Omega$ and an $\epsilon >0$. There exists an $m \geq 1$ such that $K \subset K_m$ and an $s \geq 1$ such that $\tfrac{1}{s}< \epsilon$. For these $g$, $K_m$ and $s$, there exists a $j \geq 1$ such that
  \begin{equation*}
   \sup\limits_{\zeta \in K} |p_j(\zeta)-g(\zeta)| \leq \sup\limits_{\zeta \in K_m} |p_j(\zeta)-g(\zeta)|< \tfrac{1}{2s}
  \end{equation*}
For these $K_m$, $s$ and $j$, there exists an $N \geq 0$ such that
  \begin{equation*}
  \sup\limits_{\zeta \in K} |\ttn(f)(\zeta)- p_j(\zeta)| \leq  \sup\limits_{\zeta \in K_m} |\ttn(f)(\zeta)- p_j(\zeta)|< \tfrac{1}{2s}
  \end{equation*}
  By the triangle inequality, for $z \in K$, we have
  \begin{align*}
    |\ttn(f)(z)- g(z)|& \leq |\ttn(f)(z)- p_j(z)|+|p_j(\zeta)-g(\zeta)| \\
    & \leq \sup\limits_{\zeta \in K} |\ttn(f)(\zeta)- p_j(\zeta)| +\sup\limits_{\zeta \in K} |p_j(\zeta)-g(\zeta)|\\
    &< \frac{1}{2s} + \frac{1}{2s}
  \end{align*}
  Therefore, $\sup\limits_{\zeta \in K} |\ttn(f)(\zeta)- g(\zeta)| \leq \tfrac{1}{s}< \epsilon$, so $\{\ttn(f)\}$ is dense in $\holo$.
\end{proof}
\begin{proposition}
  \label{so g_d}
  $\so$ is a $G_\d$ set in $\holo$.
\end{proposition}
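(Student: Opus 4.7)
The plan is to use the representation of $\so$ given by Proposition \ref{so=} and verify that every building block in this representation is an open set, so that $\so$ emerges as a countable intersection of open sets.

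Concretely, for positive integers $s, j, m$ and $N \geq 0$, introduce the set
\begin{equation*}
V_{s,j,m,N} = \left\{ f \in \holo : \sup_{\zeta \in K_m} |\ttn(f)(\zeta) - p_j(\zeta)| < \tfrac{1}{s} \right\},
\end{equation*}
so that Proposition \ref{so=} reads
\begin{equation*}
\so = \bigcap_{s,j,m=1}^{\infty} \bigcup_{N=0}^{\infty} V_{s,j,m,N}.
\end{equation*}
First I would show that each $V_{s,j,m,N}$ is open in $\holo$. This reduces to the continuity of the map $\Phi_{j,m,N} : \holo \to \mathbb{R}$ defined by $\Phi_{j,m,N}(f) = \sup_{\zeta \in K_m} |\ttn(f)(\zeta) - p_j(\zeta)|$, since $V_{s,j,m,N} = \Phi_{j,m,N}^{-1}\bigl([0, \tfrac{1}{s})\bigr)$.

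The continuity of $\Phi_{j,m,N}$ is a two-step composition. The operator $\ttn : \holo \to \holo$ was already observed to be linear and continuous in the discussion preceding the definition of $\so$ (as a finite linear combination of the derivation operators, which are continuous by the Weierstrass theorem, followed by multiplication by the fixed polynomial $(-\zeta)^n$, which is also continuous on $\holo$). Hence $f \mapsto \ttn(f) - p_j$ is continuous from $\holo$ into itself. Composing with the continuous seminorm $g \mapsto \|g\|_{K_m} = \sup_{\zeta \in K_m}|g(\zeta)|$ on $\holo$ gives that $\Phi_{j,m,N}$ is continuous, so $V_{s,j,m,N}$ is open.

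Given this, $\bigcup_{N=0}^\infty V_{s,j,m,N}$ is open for each triple $(s,j,m)$ as a union of open sets, and the countable intersection over $(s,j,m) \in \mathbb{N}^3$ is, by definition, a $G_\delta$ subset of $\holo$. Invoking Proposition \ref{so=} finishes the argument. The only step with any substance is the verification that $\ttn$ is continuous on $\holo$, but this has already been recorded; beyond that, the proof is a purely topological assembly and no serious obstacle is expected.
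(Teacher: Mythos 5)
Your proof is correct and takes essentially the same approach as the paper: both reduce the statement to showing that each building block $E_{j,s,m,N}$ (your $V_{s,j,m,N}$) in the decomposition of Proposition (\ref{so=}) is open. The only difference is in how openness is verified --- the paper re-derives the continuity of $f \mapsto \ttn(f)$ on $K_m$ by an explicit sequential estimate (using $\sum_{n=0}^{N}|z|^n/n! \leq e^{|z|}$), whereas you invoke the previously recorded continuity of $\ttn$ composed with the continuous seminorm $\|\cdot\|_{K_m}$; both arguments are sound.
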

\begin{proof}
  By Proposition (\ref{so=}), it suffices to show that, for $j,s,m \geq 1$ and $N \geq 0$, the set
  \begin{equation*}
   E_{j,s,m,N}:= \big\{f \in \holo: \sup\limits_{\zeta \in K_m}|\ttn (f)(\zeta)-p_j(\zeta)| < \frac{1}{s} \big\}
  \end{equation*}
  is open in $\holo$.
   \par
   To this end, consider functions $g_k \in \holo$, $k \geq 1$, and $g \in E_{j,s,m,N}$ such that $g_k \longrightarrow g$ uniformly on compact subsets of $\Omega$. It suffices to find a $k_0$ such that $g_k \in E_{j,s,m,N}$, for all $k \geq k_0$. Since $g \in E_{j,s,m,N}$, there exists a $\d >0$ such that
  \begin{equation*}
    \sup\limits_{\zeta \in K_m} |\ttn (g)(\zeta)-p_j(\zeta)|< \tfrac{1}{s}- 2\d
  \end{equation*}
  Set $M = \max {\{e^{|\zeta|}:\zeta \in K_m}\}$. By the Weierstrass Theorem we have that $g_k^{(i)} \longrightarrow g^{(i)}$ uniformly on compact subsets of $\Omega$, $i=0,1,...,N$, so there exists a $k_0 \in \mathbb{N}$ such that
  \begin{equation*}
    \|g_k^{(i)}- g^{(i)}\|_{K_m}< \frac{\d}{M}
  \end{equation*}
  for all $i=0,1,...N$. Therefore, for $z \in K_m$ and $k \geq k_0$ we have
  \begin{align*}
    |\ttn (g_k)(z)-p_j(z)|& \leq |\ttn (g_k)(z)-\ttn(g)(z)|+|\ttn (g)(z)-p_j(z)| \\
    &=\Big| \sum\limits_{n=0}^{N}\frac{g_k^{(n)}(z)-g^{(n)}(z)}{n!}(-z^n) \Big|+|\ttn (g)(z)-p_j(z)|\\
    &\leq \sum\limits_{n=0}^{N}\frac{|g_k^{(n)}(z)-g^{(n)}(z)|}{n!}|z|^n +\sup\limits_{\zeta \in K_m} |\ttn (g)(\zeta)-p_j(\zeta)| \\
    &< \sum_{n=0}^{N}\frac{\|g_k^{(n)}-g^{(n)}\|_{K_m}}{n!}|z|^n +\frac{1}{s} -2\d \\
    &<\frac{\d}{M} \sum_{n=0}^{N}\frac{|z|^n}{n!}  +\frac{1}{s} -2\d \\
    & \leq  \frac{\d}{M} \sum_{n=0}^{\infty}\frac{|z|^n}{n!}  +\frac{1}{s} -2\d \\
    &=\frac{\d}{M} e^{|z|}  +\frac{1}{s} -2\d \\
    & \leq \frac{\d}{M} \hspace{3pt} M  +\frac{1}{s} -2\d \\
    &=\frac{1}{s}-\d
  \end{align*}
  Since the $z \in K_m$ was arbitrary, we have that
  \begin{equation*}
    \sup\limits_{\zeta \in K_m} |\ttn (g_k)(\zeta)-p_j(\zeta)|\leq \frac{1}{s}- \d <\frac{1}{s}
  \end{equation*}
  for all $k \geq k_0$. Hence $g_k \in E_{j,s,m,N}$, $k \geq k_0$. This completes the proof.
\end{proof}
\begin{proposition}
  \label{so either or}
  Let $\Omega$ be a simply connected domain in $\mathbb{C}$.
  Either $\so = \emptyset$ or $\so$ is a dense $G_\d$ set in $\holo$.
\end{proposition}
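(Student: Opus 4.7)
The plan is to leverage Proposition \ref{so g_d}, which already shows that $\so$ is a $G_\delta$ subset of $\holo$; it remains only to prove that $\so$ is dense whenever it is non-empty. Assume therefore $f_0 \in \so$. The strategy is to exhibit a dense subset of $\holo$ contained in $\so$; specifically, I would show that $f_0 + \mathcal{P} \subset \so$, where $\mathcal{P}$ denotes the set of polynomials. Since $\Omega$ is simply connected, Runge's theorem gives that $\mathcal{P}$ is dense in $\holo$, and translation by $f_0$ is a homeomorphism of $\holo$, so $f_0 + \mathcal{P}$ is itself dense in $\holo$.

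The key computation is that polynomials behave trivially under $\ttn$ eventually: if $p \in \mathcal{P}$ has degree $d$ and $N \geq d$, then $p^{(n)} \equiv 0$ for $n > d$, hence
\[
\ttn(p)(\zeta) = \sum_{n=0}^{N} \frac{p^{(n)}(\zeta)}{n!}(-\zeta)^n = \sum_{n=0}^{d} \frac{p^{(n)}(\zeta)}{n!}(0-\zeta)^n = p(0),
\]
which is the (finite) Taylor expansion of $p$ about $\zeta$ evaluated at $0$. Thus $\ttn(p)$ stabilizes to the constant function $p(0)$ for all $N \geq d$. By linearity of each $\ttn$, one has $\ttn(f_0 + p) = \ttn(f_0) + p(0)$ for $N \geq d$. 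Since $\{\ttn(f_0)\}_{N \geq 0}$ is dense in $\holo$ and $\holo$ has no isolated points, its tail $\{\ttn(f_0) : N \geq d\}$ remains dense; translating by the constant function $p(0)$ (a homeomorphism of $\holo$) then shows that $\{\ttn(f_0 + p) : N \geq d\}$ is dense, whence the full orbit $\{\ttn(f_0 + p)\}_{N \geq 0}$ is dense and $f_0 + p \in \so$.

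The main difficulty is choosing the right perturbation: the usual $f \mapsto f + \tfrac{1}{n}g$ trick from Proposition \ref{eitheror} is awkward here because $\ttn(g)$ varies with $N$, so one cannot easily steer $\ttn(f + \tfrac{1}{n}g)$ to a prescribed target in $\holo$. The crucial observation is to perturb by polynomials, along which the orbit of $\ttn$ is eventually a fixed constant, so that the orbit of the perturbed function $f_0 + p$ agrees (up to finitely many initial terms) with a fixed translate of the orbit of $f_0$, which preserves density automatically.
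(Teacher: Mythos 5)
Your proof is correct and follows essentially the same route as the paper: show that $f_0+p\in\so$ for every polynomial $p$, using that $\ttn(f_0+p)=\ttn(f_0)+\ttn(p)$ with $\ttn(p)$ independent of $N$ once $N\geq\deg p$, and then invoke Runge's theorem. The only (harmless) difference is that you explicitly identify the shift as the constant $p(0)$ and justify the passage to the tail of the orbit via the absence of isolated points, whereas the paper keeps the shift as a polynomial $q_p$ and targets $g-q_p$ directly.
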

\begin{proof}
  If $\so \neq \emptyset$, by Proposition (\ref{so g_d}) it suffices to show that $\so$ is dense in $\holo$.
   \par
  Let $f \in \so$. Observe that, if $p$ is a polynomial, then $f+p \in \so$. Indeed, $f+p \in \holo$ and, for all $N> \deg p$, we have that $\ttn(f+p)= \ttn(f)+q_p$, where
  \begin{equation*}
    q_p(\zeta)= \sum_{n=0}^{N}\frac{(-1)^n p^{(n)}(\zeta)}{n!}\zeta ^n, \hspace{10pt} \zeta \in \Omega
  \end{equation*}
  is again a polynomial. For a function $g \in \holo$, we have that $g-q_p \in \holo$, and therefore there exists a sequence $\{\lambda_n\}$ in $\mathbb{N}$ such that $\ttln (f) \longrightarrow g- q_p$ uniformly on compact subsets of $\Omega$. But then $\ttln(f+p)=\ttln(f)+q_p \longrightarrow g$ uniformly on compact subsets of $\Omega$, i.e. $\{\ttn(f+p)\}$ is dense in $\holo$ and $f+p \in \so$. \\
  Now the density of $\so$ in $\holo$ follows easily because by Runge's Theorem the polynomials are dense in $\holo$.
\end{proof}
At this point observe that, if $0 \in \Omega$, then $\so = \emptyset$. Indeed, for $f,g \in \holo$ such that $f(0) \neq g(0)$, we have that, for any $N \in \mathbb{N}$ and any compact subset $L$ of $\Omega$ such that $0 \in L$,
\begin{equation*}
  \sup\limits_{\zeta \in L}|\ttn(f)(\zeta)-g(\zeta)| \geq |\ttn(f)(0)-g(0)|=|f(0)-g(0)|>0
\end{equation*}
so there is no subsequence of $\{\ttn(f)\}$ that converges to $g$ uniformly on compact subsets of $\Omega$. \\

\begin{definition}
  Let $\Omega$ be open in $\mathbb{C}$. The set $\sto$ is the set of all $f \in \holo$ with the property that, for every $c \in \mathbb{C}$ there exists a sequence $\{\lambda_n\}$ in $\mathbb{N}$ such that, for every $L \subset \Omega$ compact,
  \begin{equation*}
    \sup\limits_{\zeta \in L }|\widetilde{T}_{\lambda_n}(f)(\zeta)-c| \longrightarrow 0, \hspace{7pt} n \rightarrow \infty
  \end{equation*}
\end{definition}

\begin{proposition}
  The set $\sto$ is a $G_\d$ set in $\holo$.
\end{proposition}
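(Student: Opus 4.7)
The plan is to follow exactly the two-step strategy used for $\so$ in Propositions \ref{so=} and \ref{so g_d}, adapted to constant targets. I would first express $\sto$ as a countable intersection of countable unions of sets that are sublevel sets of $\ttn$ evaluated against rational constants, and then conclude openness of each such set as an immediate specialization of Proposition \ref{so g_d}.

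Fix an exhaustive sequence $\{K_m\}_m$ of compact subsets of $\Omega$ and a countable dense subset $\{c_j\}_j$ of $\mathbb{C}$ (for example, complex numbers with rational real and imaginary parts). For $j,s,m \geq 1$ and $N \geq 0$ I set
\[
F_{j,s,m,N} := \bigl\{ f \in \holo : \sup_{\zeta \in K_m}|\ttn(f)(\zeta) - c_j| < \tfrac{1}{s} \bigr\}.
\]
The first claim to establish is
\[
\sto = \bigcap_{j,s,m=1}^\infty \bigcup_{N=0}^\infty F_{j,s,m,N}.
\]
The inclusion $\subseteq$ is immediate from the definition of $\sto$: given $f \in \sto$ and indices $j,s,m$, apply the definition with $c = c_j$ to get a sequence $\{\lambda_n\}$ with $\ttln(f) \to c_j$ uniformly on $K_m$, so for large $n$, the value $\lambda_n$ witnesses $f \in F_{j,s,m,\lambda_n}$.

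The reverse inclusion requires a diagonal construction. Given $f$ in the right-hand side and $c \in \mathbb{C}$, for each integer $n \geq 1$ I pick $j_n$ with $|c - c_{j_n}| < \tfrac{1}{2n}$ and then, using $f \in \bigcup_N F_{j_n, 2n, n, N}$, choose $\lambda_n \geq 0$ with $\sup_{\zeta \in K_n}|\widetilde{T}_{\lambda_n}(f)(\zeta) - c_{j_n}| < \tfrac{1}{2n}$. The triangle inequality then gives $\sup_{\zeta \in K_n}|\widetilde{T}_{\lambda_n}(f)(\zeta) - c| < \tfrac{1}{n}$, and since every compact $L \subset \Omega$ is eventually contained in some $K_n$ by property (3) of the exhaustion, we conclude that $\ttln(f) \to c$ uniformly on compact subsets of $\Omega$. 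Hence $f \in \sto$.

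Finally, each $F_{j,s,m,N}$ is open in $\holo$: the constant function $c_j$ is a polynomial (of degree zero), so $F_{j,s,m,N}$ is the particular instance of the set $E_{j,s,m,N}$ of Proposition \ref{so g_d} obtained by taking $p_j \equiv c_j$. The openness argument given there, which uses the Weierstrass Theorem to control $\|g_k^{(i)} - g^{(i)}\|_{K_m}$ and bounds the resulting tail by $\tfrac{\delta}{M} e^{|z|} \leq \delta$, makes no use of any property of $p_j$ beyond its being a fixed element of $\holo$, and therefore carries over verbatim. This exhibits $\sto$ as a countable intersection of open sets, i.e.\ as a $G_\d$ subset of $\holo$. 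The only mildly non-trivial step is the diagonal argument producing a single sequence $\{\lambda_n\}$ that handles every compact subset of $\Omega$ simultaneously for a fixed target $c$; everything else is bookkeeping.
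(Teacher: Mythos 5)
Your proof is correct and takes essentially the same route as the paper, which simply replaces the dense family of polynomials $\{p_j\}$ by an enumeration $\{z_j\}$ of the points of $\mathbb{C}$ with rational coordinates and then cites the proofs of Propositions \ref{so=} and \ref{so g_d} verbatim. The only difference is that you make explicit the diagonal construction of a single sequence $\{\lambda_n\}$ for the reverse inclusion (needed because the definition of $\sto$ asks for one sequence working for all compact $L$ simultaneously), a step the paper leaves implicit.
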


\begin{proof}
  Let $\{z_j\}_{j \in \mathbb{N}}$ be an enumeration of the points in the complex plane with rational coordinates. Following the proof of Propositions (\ref{so=}) and (\ref{so g_d}), we get that
  \begin{equation*}
    \sto = \bigcap\limits_{s,j,m=1}^{\infty}\bigcup\limits_{N=0}^{\infty}\big\{ f \in \holo: \sup\limits_{\zeta \in K_m}|\widetilde{T}_{N} (f)(\zeta)-z_j| < \frac{1}{s}\big\}
  \end{equation*} and that the set
  \begin{equation*}
    \big\{ f \in \holo: \sup\limits_{\zeta \in K_m}|\widetilde{T}_{N} (f)(\zeta)-z_j| < \frac{1}{s}\big\}
  \end{equation*}
  is open in $\holo$, $m,j,s \geq 1$, $N \geq 0$.
\end{proof}
Observe again that, if $0 \in \Omega$, then $\sto = \emptyset$. Indeed, for $f \in \holo$, $c \in \mathbb{C}$ with $f(0) \neq c$ and $L \subset \Omega$ compact, we have that
\begin{equation*}
  \sup\limits_{\zeta \in L}|\widetilde{T}_N(f)(\zeta)-c| \geq |\widetilde{T}_N(f)(0)-c|= |f(0)-c| >0
\end{equation*}
for all $N \in \mathbb{N}$. However, we can show that $\sto$ is dense in $\holo$ if $\Omega $ is a simply connected domain and $0 \notin \Omega$:
\begin{theorem}
  \label{sto dense gd} Let $\Omega $ be a simply connected domain with $0 \notin \Omega$. Then $\sto$ contains a dense $G_\d$ set in $\holo$.
\end{theorem}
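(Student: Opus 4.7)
My plan is to prove the theorem by showing the inclusion $\uo \subseteq \sto$; once this is established the conclusion is immediate, because $\uo$ is already known (from \cite{nestoridis1999extension}) to be a dense $G_\d$ subset of $\holo$, and a set containing a dense $G_\d$ set is itself residual.

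To verify $\uo \subseteq \sto$, I would fix an arbitrary $f \in \uo$ and an arbitrary target constant $c \in \mathbb{C}$, and then apply the defining property of $\uo$ with the carefully chosen compact set $K = \{0\}$ and target function $h \equiv c$. This choice is legitimate: $K$ is compact, the hypothesis $0 \notin \Omega$ gives $K \cap \Omega = \emptyset$, the complement $\mathbb{C} \setminus \{0\}$ is connected, and $h$ is trivially continuous on $K$ while the holomorphy-in-the-interior requirement is vacuous since $K$ has empty interior. Hence the definition of $\uo$ supplies a sequence $\{\lambda_n\} \subset \{0,1,2,\ldots\}$ with
\begin{equation*}
  \sup_{\zeta \in L} \sup_{z \in K} \bigl|S_{\lambda_n}(f,\zeta)(z) - h(z)\bigr| \longrightarrow 0, \qquad n \to \infty,
\end{equation*}
for every compact $L \subset \Omega$.

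Since $K = \{0\}$ and $h(0) = c$, the inner supremum is just $|S_{\lambda_n}(f,\zeta)(0) - c| = |\ttln(f)(\zeta) - c|$, so the displayed convergence reads
\begin{equation*}
  \sup_{\zeta \in L} \bigl|\ttln(f)(\zeta) - c\bigr| \longrightarrow 0, \qquad n \to \infty,
\end{equation*}
for every compact $L \subset \Omega$. This is exactly the condition for $f \in \sto$, so $\uo \subseteq \sto$.

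There is no real obstacle here: the argument is essentially a translation lemma that records the fact that the universality of $\uo$ is strong enough to absorb the evaluation-at-zero operator, and the hypothesis $0 \notin \Omega$ is used precisely to make $\{0\}$ an admissible compact set in the definition of $\uo$. The only point to double-check when writing the final version is that the reference \cite{nestoridis1999extension} indeed gives the ``locally uniform in $\zeta$'' version of universality (that is, the one attached to $\uo$ rather than merely to $\uoz$), so that the supremum over $\zeta \in L$ really is available for every compact $L \subset \Omega$; this is what guarantees that the sequence $\{\lambda_n\}$ produced above works simultaneously for all compact subsets of $\Omega$, which is what the definition of $\sto$ demands.
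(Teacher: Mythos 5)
Your proposal is correct and follows exactly the same route as the paper: it establishes $\uo \subseteq \sto$ by applying the defining property of $\uo$ to the compact set $K=\{0\}$ (admissible precisely because $0 \notin \Omega$) with the constant target $h \equiv c$, and then invokes the known result that $\uo$ is a dense $G_\d$ set in $\holo$. Your closing remark is also on point---the paper does rely on the $\uo$ (locally uniform in $\zeta$) version of universality from the cited reference, so the supremum over compact $L \subset \Omega$ is indeed available.
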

\begin{proof}
Since $\Omega$ is a simply connected domain, the class $\uo$ is a dense $G_\d$ set in $\holo$.
  We will show that $\uo \subset \sto$.
  \par
   Let $f \in \uo$ and $c \in \mathbb{C}$. Take $K=\{0\}$, which is disjoint from $\Omega$ because $0 \notin \Omega$. Then $K$ is a compact set in $\mathbb{C}$, $K\cap \Omega = \emptyset$, $K^{c}$ is connected, and the function $h(z)=c$, $z \in K$, is continuous on $K$ and (trivially) analytic in the interior of $K$. By definition of the class $\uo$, there exists a sequence $\{\lambda_n\}$ in $\mathbb{N}$ such that, for every compact set $L \subset \Omega$,
  \begin{equation*}
    \sup\limits_{\zeta \in L}\sup\limits_{z \in K}|S_{\lambda_n}(f,\zeta)(z)-h(z)| \longrightarrow 0, \hspace{10pt} n \rightarrow \infty
  \end{equation*}
  or
  \begin{equation*}
    \sup\limits_{\zeta \in L}|S_{\lambda_n}(f,\zeta)(0)-c| \longrightarrow 0, \hspace{10pt} n \rightarrow \infty
  \end{equation*}
  But this is exactly
  \begin{equation*}
    \sup\limits_{\zeta \in L}|\widetilde{T}_{\lambda_n}(f)(\zeta)-c| \longrightarrow 0, \hspace{10pt} n \rightarrow \infty
  \end{equation*}
  Therefore, $f \in \sto$. This completes the proof.
\end{proof}
\section{A more general statement}

\par
During a seminar on these topics, T. Hatziafratis posed the following question: Let $E$ be a countable dense subset of $\mathbb{T}=\{z \in \mathbb{C}: |z|=1 \}$. Is it true that, for the generic function $f \in \hol(\mathbb{D})$, all the derivatives and anti-derivatives of $f$ are unbounded on every radius joining $0$ to a point of $E$?
\par
The answer to this question is affirmative. To see this, we examine a more general case:
\begin{proposition}
\label{general either or}
Let $\Omega \subset \mathbb{C}$ be an open set, $X$ a non-empty subset of $\Omega$.  \newline If $T: \holo \rightarrow \holo$ is a linear operator with the property that, for every $z \in \Omega$, the mapping $ \holo \ni f \mapsto T(f)(z) \in \mathbb{C}$ is continuous, and
\begin{equation*}
  S=S(T,\Omega,X)=\{f \in \holo: T(f) \text{ is unbounded on } X\},
\end{equation*}
 then either $S = \emptyset$ or $S$ is a dense $G_\d$ set in $\holo$.
\end{proposition}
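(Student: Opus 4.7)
The plan is to follow the same scheme as Proposition~\ref{eitheror}, which this statement directly generalizes: the only change is that boundedness is tested on an arbitrary non-empty subset $X \subset \Omega$ rather than on the whole codomain. Since $\holo$ with the topology of uniform convergence on compacta is a complete metric space, Baire's Theorem is available, and the pointwise continuity hypothesis on $T$ is precisely what was used in Proposition~\ref{eitheror}; no new analytic input beyond those two ingredients should be needed.

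First I would dispose of the trivial case $S = \emptyset$ and assume from here on that some $g \in \holo$ with $T(g)$ unbounded on $X$ has been fixed. For each $m \geq 1$ I would set
\begin{equation*}
U_m = \bigl\{ f \in \holo : |T(f)(z)| \leq m \text{ for all } z \in X \bigr\},
\end{equation*}
so that $S = \bigcap_{m=1}^\infty U_m^{\mathsf{c}}$. Showing $U_m$ is closed is routine: if $f_n \in U_m$ and $f_n \to f$ uniformly on compacta, then for each fixed $z \in X \subset \Omega$ the pointwise continuity of $f \mapsto T(f)(z)$ gives $T(f_n)(z) \to T(f)(z)$, whence $|T(f)(z)| \leq m$, so $f \in U_m$.

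The core step is to show $U_m^{\circ} = \emptyset$. Assuming for contradiction $f \in U_m^{\circ}$, I would consider the perturbations $f + \tfrac{1}{n} g$, which converge to $f$ uniformly on every compact subset of $\Omega$ (by the same one-line estimate as in Proposition~\ref{eitheror}). Then for some $n_0$ one has $f + \tfrac{1}{n_0} g \in U_m$, and by linearity of $T$ and the triangle inequality
\begin{equation*}
\tfrac{1}{n_0} |T(g)(z)| \leq |T(f)(z) + \tfrac{1}{n_0} T(g)(z)| + |T(f)(z)| \leq 2m
\end{equation*}
for every $z \in X$, contradicting that $T(g)$ is unbounded on $X$. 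Thus each $U_m^{\mathsf{c}}$ is open and dense in $\holo$, and Baire's Theorem finishes the argument.

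I do not expect any real obstacle: the only delicate point is the passage from $f_n \to f$ on compacta to $T(f_n)(z) \to T(f)(z)$, but this is exactly the hypothesis that $f \mapsto T(f)(z)$ is continuous at each $z$, and the argument for emptiness of $U_m^{\circ}$ needs only that $X$ is non-empty so that picking a single witness $z \in X$ with $|T(g)(z)|$ arbitrarily large is enough to derive the contradiction. In particular the proof does not require $T$ itself to be continuous as an operator $\holo \to \holo$, nor does it need $X$ to have any special structure such as openness or connectedness.
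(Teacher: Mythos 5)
Your argument is correct: the decomposition $S=\bigcap_{m\ge 1}U_m^{\mathsf{c}}$ is exactly the paper's (its $S_m$ is your $U_m^{\mathsf{c}}$), the closedness of each $U_m$ does follow from the pointwise continuity of $f\mapsto T(f)(z)$ at each $z\in X$, and the perturbation argument showing $U_m^{\circ}=\emptyset$ is sound. The one genuine difference is how density is obtained. You prove that each $U_m$ is nowhere dense and then invoke Baire's Theorem, i.e.\ you transplant the proof of Proposition~\ref{eitheror} wholesale. The paper's own proof of Proposition~\ref{general either or} instead shows directly that $S$ is dense: having fixed $g\in S$ and an arbitrary $f\in\holo$ with $T(f)$ bounded on $X$ by $M_1$, it shows that \emph{every} perturbation $f+\frac{1}{n}\,g$ already lies in $S$ (if $T(f+\frac{1}{n}\,g)$ were bounded by $M_2$, then $T(g)$ would be bounded by $n(M_1+M_2)$), so $f$ is a limit of elements of $S$ without any appeal to Baire or to completeness of $\holo$. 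This is not cosmetic: the paper exploits it immediately in Proposition~\ref{either or topological vs}, where $\holo$ is replaced by an arbitrary topological vector space --- not assumed complete or even metrizable --- and the same proof goes through verbatim, whereas your route collapses there because Baire's Theorem is no longer available. For the statement as literally posed both proofs are equally valid; yours is a fine category argument, but the paper's direct density argument is strictly more portable.
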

\begin{proof}
  \par
To show that $S$ is a $G_\d$ set, for $m \geq 1$, define
\begin{equation*}
  S_m=\{f \in \holo: \exists z \in X \text{ such that } |T(f)(z)|>m \}
\end{equation*}
Then $S= \bigcap\limits_{m=1}^{\infty} S_m$. Since the mapping $f \mapsto T(f)(z)$ is continuous, the set $S_m$ is open in $\holo$, for each $m \geq 1$. Hence, $S$ is a $G_\d$ set in $\holo$.
\par
To show that $S$ is dense in $\holo$ if it is not empty, let $g \in S$, i.e. $g \in \holo$ and $T(g)$ is unbounded on $X$, and let $f \in \holo$. If $T(f)$ is unbounded on $X$, then $f \in S$ and $f$ is (trivially) the limit in $\holo$ of a sequence of functions in $S$. If $T(f)$ is bounded on $X$ by, say, $M_1$, then, for a fixed $n \geq 1$, the function $T(f+\frac{1}{n} \hspace{2pt}g)$ is unbounded on $X$. Indeed, suppose it is bounded on $X$ by a positive number $M_2$. Then, if $z \in X$, by the linearity of $T$ we would have
\begin{align*}
  |T(g)(z)| & = n \hspace{2pt} |T(\frac{1}{n} \hspace{2pt} g)(z)| \\
            & = n \hspace{2pt} |T(f+\frac{1}{n} \hspace{2pt} g)(z)- T(f)(z)|\\
            &\leq n \hspace{2pt} |T(f+\frac{1}{n} \hspace{2pt} g)(z)|+n \hspace{2pt}|T(f)(z)|\\
            &\leq n \hspace{2pt} M_2 +n \hspace{2pt} M_1
\end{align*}
But this means that $T(g)$ is bounded on $X$ by $n \hspace{2pt}(M_1+M_2)$, which is contradictory to the fact that $T(g)$ is unbounded on $X$. Therefore, $T(f+\frac{1}{n} \hspace{2pt}g)$ is unbounded on $X$ for every $n \geq 1$; in other words $f+\frac{1}{n} \hspace{2pt}g \in S$, for every $n \geq 1$ . But $f+\frac{1}{n} \hspace{2pt}g \longrightarrow f$, $n \rightarrow \infty$, uniformly on compact subsets of $\Omega$, so $f$ is again the limit in $\holo$ of a sequence of functions in $S$. Since $f$ was an arbitrary function in $\holo$, $S$ is dense in $\holo$ and the proof is complete.
\end{proof}
\par
Consider now countable $T^{(k)}$ and $X_m$ such that $S(T^{(k)}, \Omega, X_m) \neq \emptyset$, for all $k,m$. Then Baire's Theorem gives that $\bigcap\limits_{k,m}S(T^{(k)}, \Omega, X_m)$ is a dense $G_\d$ set in $\holo$. This answers the aforementioned question in the affirmative, because if $\zeta_m \in E$ and $X_m$ is the radius joining $0$ to $\zeta_m$, then the function $g(z)= \frac{1}{z-\zeta_m}$, $z \in \mathbb{D}$, belongs to $S(T^{(k)},\mathbb{D},X_m)$ for all $k \geq 0$, where $T$ is the differentiation operator.
\par
More generally, we can replace $\mathbb{D}$ with any open non-empty set $\Omega$ in $\mathbb{C}$, $T$ being the differentiation operator and $X_m \subset \Omega$ having at least one accumulation point in $\partial \Omega$. If $\Omega $ is simply connected, then we obtain the analogous result for both the integration operator and the operator related to Taylor partial sums $\ttn$ that was defined before.
\par
Observing that in the proof of Proposition (\ref{general either or}) no properties of $\holo$ were used other than those of a topological vector space, we can obtain the best generalization of our result, where completeness is not assumed and the proof does not use Baire's Theorem:
\begin{proposition}
  \label{either or topological vs}
  Let $\mathcal{V}$ be a topological vector space over the field $\mathbb{R}$ or $\mathbb{C}$ and $X$ a non-empty set. Denote by $F(X)$ the set of all complex-valued functions on $X$ and consider a linear operator $T: \mathcal{V} \rightarrow F(X)$ with the property that, for all $x \in X$, the mapping $ \mathcal{V} \ni \alpha \mapsto T(\alpha)(x) \in \mathbb{C}$ is continuous. Let $S=\{\alpha \in \mathcal{V}: T(\alpha) \text{ is unbounded on } X\}$. Then either $S= \emptyset$ or $S$ is a dense $G_\d$ set in $\mathcal{V}$.
\end{proposition}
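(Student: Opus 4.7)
The plan is to imitate the proof of Proposition \ref{general either or} verbatim, simply replacing $\holo$ by $\mathcal{V}$ and $\Omega$ by $X$. The key observation is that in that earlier proof neither Baire's Theorem, nor a metric, nor completeness is actually invoked; what is really used is only the pointwise continuity of $\alpha \mapsto T(\alpha)(x)$ (which is hypothesized here) and the continuity of the two vector space operations (which holds by definition in any topological vector space). So the generalization is nearly free once one explicitly locates where these two ingredients enter.

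First I would treat the $G_\delta$ assertion. For $m \geq 1$ set
\[
S_m = \{\alpha \in \mathcal{V} : \text{there exists } x \in X \text{ with } |T(\alpha)(x)| > m\},
\]
so that $S = \bigcap_{m=1}^\infty S_m$. For each fixed $x \in X$ the set $\{\alpha \in \mathcal{V} : |T(\alpha)(x)| > m\}$ is the preimage of the open set $\{w \in \mathbb{C} : |w| > m\}$ under the continuous map $\alpha \mapsto T(\alpha)(x)$, hence open in $\mathcal{V}$. Since $S_m$ is the union over $x \in X$ of these open sets, $S_m$ is open, and therefore $S$ is $G_\delta$.

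Next, assume $S \neq \emptyset$ and fix $g \in S$. Let $f \in \mathcal{V}$ be arbitrary and let $U$ be any neighborhood of $f$ in $\mathcal{V}$; I must produce an element of $S \cap U$. If $T(f)$ is already unbounded on $X$ then $f \in S \cap U$ and we are done. Otherwise there is a constant $M_1$ with $|T(f)(x)| \leq M_1$ for all $x \in X$. I claim that $f + \tfrac{1}{n}g \in S$ for every $n \geq 1$: if $T(f + \tfrac{1}{n}g)$ were bounded on $X$ by some $M_2$, then by linearity
\[
|T(g)(x)| = n\,|T(\tfrac{1}{n}g)(x)| \leq n\,|T(f+\tfrac{1}{n}g)(x)| + n\,|T(f)(x)| \leq n(M_1+M_2)
\]
for all $x \in X$, contradicting $g \in S$. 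It remains to check that $f + \tfrac{1}{n}g \in U$ for all $n$ large, i.e.\ that $f + \tfrac{1}{n}g \to f$ in $\mathcal{V}$; this follows from the joint continuity of scalar multiplication at $(0,g)$, which forces $\tfrac{1}{n}g \to 0$, combined with the continuity of addition.

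The only step that requires any new thought compared with Proposition \ref{general either or} is the convergence $\tfrac{1}{n}g \to 0$, because we no longer have a norm or a translation invariant metric to bound $\|\tfrac{1}{n}g\|$ directly. Fortunately this is the one place in the argument where the TVS structure matters, and it is handled by the abstract fact that for any neighborhood $W$ of $0 \in \mathcal{V}$ the set $\{\lambda \in \mathbb{C} : \lambda g \in W\}$ is a neighborhood of $0$ in the scalar field, so contains $\tfrac{1}{n}$ for all sufficiently large $n$. I do not expect any other genuine obstacle; everything else is a transcription of the earlier proof.
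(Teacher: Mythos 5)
Your proof is correct and follows essentially the same route as the paper, which simply writes $S=\bigcap_{m}\bigcup_{x\in X}\{\alpha:|T(\alpha)(x)|>m\}$ for the $G_\delta$ part and declares the density argument identical to that of Proposition \ref{general either or}. Your explicit verification that $\tfrac{1}{n}g\to 0$ via continuity of scalar multiplication is exactly the one detail the paper leaves implicit, and it is handled correctly.
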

 \begin{proof}
   That $S$ is a $G_\d$ set follows from the fact that $S = \bigcap\limits_{m=1}^{\infty} \bigcup\limits_{x \in X}\{\alpha \in \mathcal{V}: |T(\alpha)(x)|> m\}$ and the continuity of $\alpha \mapsto T(\alpha)(x)$. The proof that $S$ is dense if it is non-empty is identical to the proof of Proposition (\ref{general either or}).
 \end{proof}

\vspace{3pt}
\textit{Acknowledgement}---
The topics discussed in this article were suggested by V. Nestoridis. I would like to thank him for the guidance and the insightful suggestions offered. I would also like to thank T. Hatziafratis for his taking interest in the topics discussed.

\bibliography{Arxiv11}
\bibliographystyle{plain}
\begin{flushleft}
  Department of Mathematics \\
National and Kapodistrian University of Athens\\
Panepistimiopolis, 157-84\\
Athens\\
 Greece\\
\vspace{5pt}
e-mail: siskakmaria@math.uoa.gr
\end{flushleft}

\end{document}